\newtheorem{theorem}{Theorem}[section]
\newtheorem{corollary}[theorem]{Corollary}
\newtheorem{lemma}[theorem]{Lemma}
\newtheorem{proposition}[theorem]{Proposition}
\theoremstyle{definition}
\newtheorem{definition}[theorem]{Definition}
\newtheorem{claim}[theorem]{Claim}
\newcommand{\p}{\psi}
\newcommand{\T}{\mathsf{T}}
\newcommand{\Ntree}{\mathsf{N}}
\newcommand{\Stree}{\mathsf{S}}
\newcommand{\e}{\varepsilon}
\newcommand{\rootv}{\mathsf{r}}
\DeclareMathOperator{\diam}{diam}
\renewcommand{\emptyset}{\varnothing}
\newcommand{\mlevel}[1]{\ell({#1})}
\newcommand{\rep}[1]{\mathrm{x}(#1)}
\newcommand{\repT}[2]{\mathrm{x}_{#1}(#2)}
\newcommand{\trep}[1]{\tilde{\mathrm{x}}(#1)}
\newcommand{\pb}{\diamondsuit}
\newcommand{\tvb}{\tilde \heartsuit}
\newcommand{\Image}{\mathrm{Im}}
\newcommand{\bQ}{Q^c}
\newcommand{\bbQ}{\overline{Q^c}}
\newcounter{EnumResume}
\begin{document}




\title{Dvoretzky-type theorem for Ahlfors regular spaces}

\author[M. Mendel]{Manor Mendel}
\address{Department of Mathematics and Computer Science\\ The Open University of Israel\\ 1 University Road\\ Raanana 43107, Israel}
\email{manorme@openu.ac.il}

\date{}

\keywords{metric Ramsey theory,  Dvoretzky-type theorems,
ultrametric skeleton, biLipschitz embeddings,  Hausdorff dimension, Ahlfors regular spaces}

\subjclass[2020]{51F30, 28A78, 46B85}

\begin{abstract}
It is proved that for any $0<\beta<\alpha$, any bounded Ahlfors $\alpha$-regular space contains a $\beta$-regular compact subset that
embeds biLipschitzly in an ultrametric with distortion at most
$O(\alpha/(\alpha-\beta))$. 
The bound on the distortion is asymptotically tight
when $\beta\to \alpha$.
The main tool used in the proof is a regular form of the ultrametric skeleton theorem.
\end{abstract}

\maketitle

\setcounter{tocdepth}{1} 
\tableofcontents

\section{Introduction} \label{sec:intro}

Fix a metric space $(X,d)$, a point $x\in X$ and a radius $r\in [0,\infty)$. The corresponding closed ball is denoted $B(x,r)=B_d(x,r)=\{y\in X:\ d(y,x)\le r\}$, and the corresponding open ball is denoted $B_d^\circ(x,r)=\{y\in X:\ d(y,x)< r\}$. 
A complete metric space $(X,d)$ is called \emph{Ahlfors $\alpha$-regular} (or $\alpha$-regular for short), 
if there exists a Borel measure $\mu$ such that for all $x\in X$ and $r\in(0,\diam(X))$,
\begin{equation} \label{eq:def:ahlfors-regular}
cr^\alpha \le \mu(B_d(x,r)) \le C r^\alpha .
\end{equation}
Here $C\ge c>0$ are independent of $x$ and $r$. 
Ahlfors $\alpha$-regular space $X$ has, in particular,  Hausdorff dimension $\dim_H(X)=\alpha$.
For more information on Ahlfors regular spaces and their importance, see~\cite{Hutchinson,DS-fractals,Heinonen}.

An ultrametric space is a metric space $(U,\rho)$ satisfying the strengthened triangle inequality $\rho(x,y)\le \max\{\rho(x,z),\rho(y,z)\}$ for all $x,y,z\in U$. 
Saying that $(X,d)$ embeds (biLipschitzly) with distortion $D\in [1,\infty)$ into an ultrametric space means that there exists an ultrametric $\rho$ on $X$ satisfying $d(x,y)\le \rho(x,y)\le Dd(x,y)$ for all $x,y\in X$. 
The \emph{ultrametric distortion} of $X$ is the infimum over $D$ for which $X$ embeds in an ultrametric with distortion at most $D$.

In this paper we study  regular (approximate) ultrametric subsets of Ahlfors regular spaces.
Arcozzi et.~al.~\cite[Theorem~1]{arcozzi2019ahlfors} proved that for  
every 
$0<\beta<\alpha$, any  bounded
Ahlfors $\alpha$-regular space $X$ contains a 
$\beta$-regular subset $Y$. 
Matilla and Saaranen~\cite[Theorem~3.1]{MS-maps} (see also \cite[Corollary~5.2]{JJK}) showed that $Y$ can be chosen to be both $\beta$-regular and biLipschitz embeddable in an ultrametric.
In their proof, the ultrametric distortion of $Y$ is bounded above by
$\exp(O(\alpha/(\alpha-\beta)))$.%
\footnote{%
The embedding of $Y$ in~\cite{MS-maps} is into a Cantor set of $[0,1]^n$ for some $n>\beta$,
and the distortion is $\sqrt{n}\exp(O(\alpha/(\alpha-\beta)))$. 
Their embedding is factored through a $\beta$-regular ultrametric. 
The $\sqrt{n}$ factor is  the distortion of the straightforward embedding of the ultrametric in an $n$-dimensional Cantor set, which is
irrelevant in our setting. 
However, the exponential dependence on $\alpha/(\alpha-\beta)$ seems inherent to their approach.
As aside, we note that bounded $\beta$-regular ultrametrics can be embedded in  $n$-dimensional Cantor set with a constant distortion, 
provided that $n\ge C \beta$, for some a constant $C>1$.
This can be done using binary error-correcting codes as in~\cite[Proposition~3]{LMM-hausdorff}.}
Here we prove a similar result with an exponentially improved bound on the 
ultrametric distortion.

\begin{theorem}
\label{thm:cantor-ahlfors}
For every $0<\beta<\alpha$, any bounded
Ahlfors $\alpha$-regular space $X$ contains a  $\beta$-regular compact subset $Y$ whose ultrametric distortion is $O(\alpha/(\alpha-\beta))$.
\end{theorem}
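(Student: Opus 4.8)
The plan is to deduce Theorem~\ref{thm:cantor-ahlfors} from a \emph{regular ultrametric skeleton theorem}, which is the tool the abstract advertises. Recall that the classical ultrametric skeleton theorem (Mendel--Naor) says that any compact metric space carrying a Borel probability measure $\mu$ contains, for every $\e\in(0,1)$, a subset $S$ that embeds in an ultrametric with distortion $O(1/\e)$ and that is ``large'' in the sense that $\mu(S)$ is controlled, or more usefully that $S$ supports a measure obeying a nondegeneracy (growth) bound. For the Ahlfors setting I want a version whose conclusion is quantitative at \emph{all} scales simultaneously: the skeleton $S$ should carry a measure $\nu$ with $\nu(B(x,r))\le C' r^{\beta}$ for all $x\in S$ and $r>0$, where $\beta$ is tied to $\e$ and to the regularity exponent $\alpha$ of the ambient space, together with the distortion bound $\rho\le (C/\e)\,d$ on $S$. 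This upper mass bound is the ``$\beta$-regular from above'' half; I will get the matching lower bound $\nu(B(x,r))\ge c' r^{\beta}$ separately.

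The key steps, in order, are as follows. \textbf{(1)} State and prove (or invoke, if it appears earlier in the paper) the regular ultrametric skeleton theorem: starting from the Ahlfors $\alpha$-regular measure $\mu$ on $X$ with $\mu(B(x,r))\asymp r^\alpha$, run the skeleton construction with parameter $\e$ chosen so that the output distortion is $O(\alpha/(\alpha-\beta))$, i.e.\ $\e\asymp (\alpha-\beta)/\alpha$. The construction produces a hierarchically-nested family of ``clusters'' indexed by a tree, and an ultrametric $\rho$ on the limiting set $S$ with $d\le\rho\le(C/\e)d$; the Mendel--Naor mass bound then yields that the natural measure $\nu$ on $S$ (the weak-$*$ limit of $\mu$ restricted and renormalized along the tree) satisfies $\nu(B_\rho(x,r))\le (\text{const})\, r^{\beta}$, hence $\nu(B_d(x,r))\le (\text{const})\,r^{\beta}$ after absorbing the distortion. \textbf{(2)} Upgrade the ``upper bound only'' set to a genuinely $\beta$-regular one. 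This is a standard pruning argument: inside $S$, use the upper mass bound to find, at each scale $2^{-k}$, a well-separated net, and pass to a sub-subset $Y\subseteq S$ — itself a Cantor-type set carried along a subtree — on which a renormalized measure satisfies the two-sided estimate $c\,r^\beta\le\nu(B(x,r))\le C\,r^\beta$. Since $Y\subseteq S$, it inherits the ultrametric embedding with distortion $O(\alpha/(\alpha-\beta))$, and $Y$ is compact. \textbf{(3)} Check that the chosen $\e$ makes the distortion exactly of the claimed order and that $\beta$ can be taken to be any prescribed value in $(0,\alpha)$ (by first fixing $\beta$, then choosing $\e=\e(\alpha,\beta)$, then, if the construction naturally outputs some $\beta'\le\beta$, thinning $Y$ further to reduce the regularity exponent down to $\beta$, which for ultrametric Cantor sets is elementary).

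I expect the main obstacle to be step~(1): obtaining a skeleton whose nondegeneracy is expressed as a \emph{scale-uniform} upper mass bound $\nu(B(x,r))\le C r^\beta$ with the \emph{correct} exponent $\beta$ linked linearly to $\e$ — i.e.\ getting $\beta = \alpha(1-O(\e))$ rather than, say, $\beta=\alpha - O(\e)$ with a bad implied constant, or an exponent that degrades multiplicatively at each level. The Mendel--Naor skeleton gives a single-scale (total mass) statement; promoting it to all scales requires running the argument self-similarly down the cluster tree and carefully tracking how the measure renormalizes, making sure the per-level loss telescopes to only a factor $(1-O(\e))$ in the exponent rather than compounding. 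This is exactly where the exponential improvement over~\cite{MS-maps} comes from — their factored-through-$\beta$-regular-ultrametric approach loses a constant factor per scale, giving $\exp(O(\alpha/(\alpha-\beta)))$, whereas a direct skeleton construction should lose only additively in the exponent. A secondary technical point is verifying the lower mass bound survives the pruning in step~(2), which needs the ambient space to be genuinely $\alpha$-regular (both inequalities in~\eqref{eq:def:ahlfors-regular}) so that clusters are not too small; boundedness of $X$ is used to have a top scale to start the recursion and to ensure compactness of $Y$.
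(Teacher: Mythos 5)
Your step~(1) and the exponent bookkeeping ($t\asymp\alpha/(\alpha-\beta)$, upper bound $\nu(B(x,r))\lesssim r^{(1-1/t)\alpha}$ with distortion $O(t)$) match what the known skeleton theorem of~\cite{MN-hausdorff,MN-skeleton} already delivers, and your final thinning step to reach an arbitrary $\beta<\alpha$ inside an ultrametric is indeed elementary (it is the paper's Lemma~\ref{lem:um-subset-um}). The genuine gap is step~(2): the claim that the matching lower bound $\nu(B(x,r))\ge c\,r^{\beta}$ can be obtained ``separately'' from an upper-bound-only skeleton by a ``standard pruning argument.'' Pruning cannot work in general, because two-sided regularity is a geometric constraint on the subset itself, not just on the choice of measure: if $S$ is such that $S\cap B(y,r)$ is contained in a ball of radius $s\ll r$ for some $y\in S$, then \emph{any} measure on (any subset of) $S$ obeying the upper bound at exponent $\beta$ gives $\nu(B(y,r))\le C s^{\beta}\ll r^{\beta}$, so no sub-subset and no renormalization restores the lower bound at the same exponent. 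The classical skeleton guarantees nothing that rules out such local sparseness; deleting points only makes it worse. This is precisely why the upper-bound-only statement was already known while the two-sided statement was not, and it is the entire content of Theorem~\ref{thm:um-skeleton-regular}, whose lower bound~\eqref{eq:measure-shrink-regular} has to be built \emph{into} the construction rather than retrofitted.

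Concretely, the paper secures the lower bound by two devices that your outline omits: (i) the hierarchical decomposition is run not on points of $X$ directly but on a net-tree, so that every cluster $C_u$ contains an ambient ball of radius proportional to $\Delta(\p(u))/t$ (item~\eqref{it:B-subset-C_u} of Lemma~\ref{lem:um-skeleton-regular}); by the lower half of~\eqref{eq:def:ahlfors-regular} such a ball carries mass $\gtrsim (r/t)^{\alpha}$, which is what makes the lower estimate quantitative; and (ii) the sub-additive premeasure $\xi$ produced by the decomposition is converted to an additive one by \emph{trimming the tree} (Lemma~\ref{lem:balanced-subtree}) rather than by lowering $\xi$ down the tree as in~\cite{MN-skeleton,mendel2021simple}---the latter destroys any lower control on $\nu$, as the paper's remarks point out. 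Without an analogue of these two steps, your argument proves only the already-known ``upper regular'' statement, and the lower half of $\beta$-regularity---the new part of Theorem~\ref{thm:cantor-ahlfors} relative to~\cite{MN-hausdorff}---is not established.
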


When $\beta\to \alpha$ the  bound on the distortion in the Theorem~\ref{thm:cantor-ahlfors} is asymptotically tight. 
A tight example is given in~\cite[Theorem~1.4]{MN-hausdorff}.
Specifically, for any $\alpha>0$ a compact metric $X_\alpha$ 
is constructed  with the following properties. 
Its Hausdorff dimension is $\dim_H(X_\alpha)=\alpha$, and
for any subset $S\subseteq X_\alpha$,  the ultrametric distortion (indeed, even the Euclidean distortion) of $S$ 
is $\Omega(\alpha/(\alpha-\beta))$, where $\beta=\dim_H(S)$.
Observing the space $X_\alpha$ from~\cite{MN-hausdorff}, it is clear that it is compact, self-similar and it satisfies the open-set condition of~\cite{Hutchinson}. Therefore it is Ahlfors $\alpha$-regular (see~\cite[Theorem~5.3(1)]{Hutchinson}). 
Furthermore, when $\alpha=n\in\mathbb{N}$ is a natural number,
$[0,1]^n\subset \mathbb{R}^n$ is also an asymptotically tight example: 
It is proved in~\cite{LMM-hausdorff} that for any 
$S\subseteq [0,1]^n$ of Hausdorff dimension $\beta=\dim_H(S)$, the ultrametric distortion of $S$ is at least
$0.25\cdot n/(n-\beta)$.

Results similar to Theorem~\ref{thm:cantor-ahlfors} were previously obtained in other settings and were called
\emph{Dvoretzky-type theorems for metric spaces} and \emph{matric Ramsey theory}, see~\cite[\S8~\&~\S9]{Naor-Ribe} and references therein. 
In particular, it is proved in~\cite{MN-hausdorff,MN-skeleton} that 
for every  compact metric space $X$ of
Hausdorff dimension $\alpha=\dim_H(X)$ and 
any $0<\beta<\alpha$, there exists a closed
subset $Y\subset X$ and a Borel measure $\nu$ supported on $Y$
such that $Y$ embeds in an ultrametric with distortion
$O(\alpha/(\alpha-\beta))$, and $\nu$ satisfies
\(
\nu(B(x,r))
\le 
C r^\beta
\)
for any $x\in Y$ and $r\in(0,\diam_d(Y))$
(which implies that $\dim_H(Y)\ge \beta$).
Here we  amend the arguments from~\cite{MN-hausdorff,MN-skeleton,mendel2021simple} so that
in the Ahlfors regular setting the measure $\nu$ would also satisfy
\(
\nu(B(x,r)) 
\ge  c r^\beta
\).

The tool we use 
is a regular form of the ultrametric skeleton theorem~\cite{MN-skeleton}, which apply more broadly to \emph{doubling spaces}.
A metric space $(X,d)$ is called \emph{$\lambda$-doubling} if any
bounded subset $Z\subseteq X$ 
can be covered by at most $\lambda$ subsets of diameter at most 
$\diam_d(Z)/2$. 
$X$ is called \emph{doubling} if it is $\lambda$-doubling for some $\lambda\in\mathbb N$.  
A Borel measure $\sigma$ on $(X,d)$ is called
$\kappa$-\emph{doubling measure} if 
$\sigma(B_d(x,2r))\le \kappa\cdot \sigma(B_d(x,r))$, for any $x\in X$ and $r>0$.
A complete metric space is doubling if and only if it has a doubling measure~\cite{VK,LS}.
Observe that an Ahlfors $\alpha$-regular space is doubling
since the measure $\mu$ from~\eqref{eq:def:ahlfors-regular}  
is a $({C2^\alpha}/{c})$-doubling measure.

 \begin{theorem}[Regular ultrametric skeleton for doubling spaces]
 \label{thm:um-skeleton-regular}
 Let $(X,d)$ be a compact $\lambda$-\emph{doubling} metric space, and let $\mu$ be a Borel probability measure on $X$.
 Then for every $t\in\mathbb N$ there exists a compact subset $S\subseteq X$  and
 a Borel probability measure $\nu$ supported on $S$ satisfying:
 \begin{itemize}
 \item  The ultrametric distortion of $S$ is at most $16t$.
 \item For every $x\in X$ and $r\in [0,\infty)$,
 \begin{equation}\label{eq:measure growth-regular}
 \nu\left(B_d(x,r)\right)\le  \lambda^{2/t} \cdot \mu(B_d(x,C_1 t  r)^{1-1/t}.
 \end{equation}
 \item For every $y\in S$ and $r\in [0,\infty)$, 
 there  exists $x\in X$ such that $B_d(x,c_1 r/t)\subset B_d(y,r)$ and
 \begin{equation}\label{eq:measure-shrink-regular}
 \nu\left(B_d(y,r)\right) \ge (\lambda^{-2/t}/2) \cdot \mu(B_d(x,c_1 r/t))^{1-1/t} .
 \end{equation}
 \end{itemize}
 Here $C_1,c_1>0$ are universal constants.
 \end{theorem}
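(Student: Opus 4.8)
The plan is to revisit the construction underlying the ultrametric skeleton theorem of Mendel and Naor~\cite{MN-skeleton} (in the streamlined form of~\cite{mendel2021simple}; see also~\cite{MN-hausdorff}), run it with parameter $1/t$, and check that with a little extra bookkeeping the \emph{same} pair $(S,\nu)$ it produces also satisfies~\eqref{eq:measure-shrink-regular}, whereas the distortion bound and~\eqref{eq:measure growth-regular} — the latter modulo the harmless factor $\lambda^{2/t}\ge1$ — are already what that proof delivers. Recall the shape of the construction: after the customary reduction to finite nets, with the final $S$ and $\nu$ recovered by a weak-$*$/Hausdorff-metric compactness argument (the two ball inequalities being first verified, up to a slight perturbation of the radii, at each finite stage), one builds a rooted tree whose vertices are \emph{clusters}: nested subsets $C\subseteq X$, organised into depths $k$ with an associated scale $\rho_k$ that decays geometrically, such that (a) the children of a cluster $C$ are disjoint and collectively carry all but a tightly controlled portion of $\mu(C)$; (b) distinct depth-$k$ clusters are at mutual $d$-distance at least $\rho_{k-1}$, and each depth-$k$ cluster $C$ satisfies $B_d(z_C,c_1\rho_k)\subseteq C\subseteq B_d(z_C,C_1t\rho_k)$ for a suitable $z_C\in C$; and (c) the scales at which a cluster is cut, and the number and $\mu$-masses of the resulting children, are chosen adaptively so as to be ``balanced.'' One then sets $S=\bigcap_k\bigcup\{C:C\text{ a depth-}k\text{ cluster}\}$ and defines the Borel probability measure $\nu$ on $S$ from the top down, splitting the mass of each cluster among its children in proportion to $\mu(\cdot)^{1-1/t}$.

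\textbf{Distortion.} This needs no change from~\cite{MN-skeleton,mendel2021simple}: by~(b), the quantity $\rho(x,y):=\rho_{k(x,y)}$, where $k(x,y)$ is the depth of the last common ancestor cluster of $x,y\in S$, is an ultrametric on $S$ with $d\le\rho\le 16t\,d$.

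\textbf{Measure estimates: the new content.} The crux is to show that every retained cluster $C$ of depth $k$ satisfies
\[
\tfrac12\lambda^{-2/t}\,\mu\bigl(B_d(z_C,c_1\rho_k)\bigr)^{1-1/t}\ \le\ \nu(C)\ \le\ \lambda^{2/t}\,\mu\bigl(B_d(z_C,C_1t\rho_k)\bigr)^{1-1/t}.
\]
Put $\Phi(C):=\nu(C)/\mu(C)^{1-1/t}$, so $\Phi(X)=1$. If $C$ has retained children $C_1,\dots,C_m$ then $\Phi(C_i)/\Phi(C)=\frac{\mu(C)^{1-1/t}}{\sum_{j}\mu(C_j)^{1-1/t}}$, so everything reduces to two-sided control of $\sum_j\mu(C_j)^{1-1/t}$ against $\mu(C)^{1-1/t}$, for which I would use the elementary inequality
\[
\theta^{-1/t}\bigl(\textstyle\sum_j w_j\bigr)^{1-1/t}\ \le\ \sum_j w_j^{1-1/t}\ \le\ m^{1/t}\bigl(\textstyle\sum_j w_j\bigr)^{1-1/t}\qquad\text{whenever }0\le w_j\le\theta\textstyle\sum_i w_i,
\]
together with~(a) (which keeps $\sum_j\mu(C_j)$ essentially equal to $\mu(C)$) and the fact that, since $X$ is $\lambda$-doubling, both $m$ and the reciprocal $1/\theta$ of the heaviest relative child-mass are bounded by fixed powers of $\lambda$ and $t$. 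The role of the balancing~(c) is exactly to arrange that the gain $\theta^{-1/t}$ in the left-hand estimate offsets the small loss in~(a), so that the drift of $\Phi$ stays controlled not just per level but in aggregate over the whole (unbounded) recursion, the telescoped product remaining inside $[\tfrac12\lambda^{-2/t},\lambda^{2/t}]$. The upper half of the displayed inequality is how~\cite{MN-skeleton} extracts~\eqref{eq:measure growth-regular}; the lower half — the genuinely new input — is obtained by running the very same chain of inequalities from below (the left-hand estimate above now replacing the right-hand one) and then bounding $\mu(C)$ from below by $\mu\bigl(B_d(z_C,c_1\rho_k)\bigr)$ via the inscribed ball in~(b).

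\textbf{From clusters to balls, and the main obstacle.} Granting the displayed cluster estimate, \eqref{eq:measure growth-regular} follows by picking, for $x\in X$ and $r>0$, the depth $k$ at which distinct same-depth clusters are more than $2r$ apart while still having circumradius $O(tr)$ — possible by~(b) and the geometric decay of the $\rho_k$; then $B_d(x,r)$ meets at most one depth-$k$ cluster $C\subseteq B_d(x,C_1tr)$, so $\nu(B_d(x,r))\le\nu(C)\le\lambda^{2/t}\mu(B_d(x,C_1tr))^{1-1/t}$ (and $\nu(B_d(x,r))=0$ if it meets none). And~\eqref{eq:measure-shrink-regular} follows by taking, for $y\in S$ and $r>0$, the deepest cluster $C_k$ on the chain through $y$ that is still contained in $B_d(y,r)$: maximality bounds $\rho_k$ below by $\Omega(r/t)$, so with $x:=z_{C_k}$ one gets $B_d(x,c_1r/t)\subseteq B_d(z_{C_k},c_1\rho_k)\subseteq C_k\subseteq B_d(y,r)$ and $\nu(B_d(y,r))\ge\nu(C_k)\ge\tfrac12\lambda^{-2/t}\mu(B_d(x,c_1r/t))^{1-1/t}$. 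The main obstacle, unsurprisingly, is the construction together with the balancing above: one must run the recursion so that, at every cluster, the chosen cut scales and the induced children are simultaneously well-separated (for the distortion), of nearly full and not-too-concentrated $\mu$-mass, and balanced enough that $\Phi$ neither inflates nor deflates beyond a $\lambda^{\pm O(1/t)}$ factor across an arbitrarily deep recursion. This balancing is the technical heart of~\cite{MN-skeleton}; the contribution here is to observe that the same estimates already pin $\Phi$ down from below, and to check that the limiting procedure preserves both ball inequalities in the quantitative form claimed.
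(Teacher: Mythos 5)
There is a genuine gap, and it sits precisely where you delegate to the ``balancing'' of the original construction. Your measure is defined by splitting the mass of each retained cluster among \emph{all} of its children in proportion to $\mu(\cdot)^{1-1/t}$, so the drift factor at one level is $\Phi(C_i)/\Phi(C)=\mu(C)^{1-1/t}/\sum_j\mu(C_j)^{1-1/t}$. Your elementary inequality controls this from below only by $m^{-1/t}$ (times a mass-retention factor), and these losses \emph{compound multiplicatively} along an unbounded chain of clusters: with $m\ge 2$ at every genuine split, the telescoped product tends to $0$, so no uniform lower bound $\Phi\ge\tfrac12\lambda^{-2/t}$ survives. Nothing in the MN-skeleton/Bartal balancing prevents this --- in fact the quantity that is (sub)additive there is the normalized functional $\xi(u)=\mu(C_u)/\mu^{\ast}_u(\cdot)^{1/t}$ coming from inequality~\eqref{eq:bar-ramsey-decomp-2}, not an approximately mass-preserving family of children (the decomposition discards the set $\bQ\setminus P$, whose $\mu$-mass need not be small), and the paper explicitly notes that the classical remedy of ``lowering $\xi$ down the tree'' destroys any lower bound by $\mu^{1-1/t}$. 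The paper's actual fix is a separate combinatorial lemma (Lemma~\ref{lem:balanced-subtree}): at each vertex one keeps only a \emph{minimal} subfamily $L$ of children with $\sum_{v\in L}\xi(v)\ge\sigma(u)$, deletes the rest of the subtree, and redistributes only among $L$; minimality forces $\sigma(v)\ge\xi(v)/2$ (or $\ge\delta\xi(v)/2$ in the singleton case) \emph{uniformly in depth}, so the deficit never accumulates. Your proposal contains no mechanism playing this role, and the claim that ``the same estimates already pin $\Phi$ down from below'' is exactly what fails.

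A second, smaller gap is your property (b): you assert that each cluster contains a ball $B_d(z_C,c_1\rho_k)$ of the ambient space, but the standard construction applies the Ramsey decomposition directly to subsets of $X$, and a cluster of the form (ball)$\,\cap\,$(previous cluster) need not contain any ambient ball of radius comparable to its scale --- the previous cluster can be thin near the chosen center. Securing this inscribed-ball property is the other new ingredient of the paper: the decomposition is run not on $X$ itself but on an auxiliary $20$-net-tree (a Christ-type hierarchy of nets), clusters are assembled from \emph{partial boundaries} of net vertices, and the packing property of the net-tree (Proposition~\ref{prop:partial-boundary}, item~(\ref{it:partial-packing})) then yields item~(\ref{it:B-subset-C_u}) of Lemma~\ref{lem:um-skeleton-regular}, i.e.\ $B_d(\rep{u},c\Delta(\p(u))/t)\subseteq C_u$. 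Your deduction of~\eqref{eq:measure-shrink-regular} from the cluster estimates is fine in outline, but both inputs it relies on --- the uniform two-sided control of $\nu$ on clusters and the inscribed ball --- are precisely the parts that need the new arguments and are not supplied by the construction you cite.
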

 
Informally speaking, the theorem above constructs for every doubling metric space and Borel measure $\mu$, an approximate 
ultrametric subset (a ``skeleton") with a measure $\nu$ that behaves ``similarly to $\mu^{1-1/t}$". 
The original ultrametric skeleton theorem~\cite{MN-skeleton} has applications for 
online algorithms, data-structures, probability theory, and geometric measure theory. 
See~\cite[\S1]{MN-skeleton} for more details.
Theorem~\ref{thm:um-skeleton-regular}, when compared to 
the original ultrametric skeleton, has an additional
lower bound~\eqref{eq:measure-shrink-regular} on $\nu$
in the conclusion,
but it also has an additional doubling condition in the assumptions. 
See Section~\ref{sec:remarks} for remarks about the necessity of the doubling assumption.

The construction of the ultrametric skeleton here follows 
the construction in~\cite{mendel2021simple}, which
in turn uses Bartal's Ramsey decomposition lemma~\cite{bartal2021advances} as a key tool. 
Since we will use Bartal's Ramsey decomposition lemma~\cite{bartal2021advances}  in slightly more general form than in~\cite{bartal2021advances,mendel2021simple}, we rephrase and reprove it in Section~\ref{sec:bartal}.
The proof of Theorem~\ref{thm:um-skeleton-regular} is detailed in Section~\ref{sec:um-skeleton-regular},
after introducing in Section~\ref{sec:net-trees} an auxiliary structure of hierarchical nets which is needed to ensure the existence of the ``small balls" in~\eqref{eq:measure-shrink-regular}.
Finally, the proof of Theorem~\ref{thm:cantor-ahlfors}, 
a rather straightforward corollary of Theorem~\ref{thm:um-skeleton-regular},
is presented in Section~\ref{sec:dvo-proof}.

\section{Bartal's Ramsey decomposition lemma}
\label{sec:bartal}

Fix a compact metric measure space $(X,d,\mu)$, and $\Delta>0$.  
For a Borel subset $A\subseteq X$, define
\begin{equation} \label{eq:mu-star}
\mu^\Delta(A)=\sup_{a\in A}\mu(B_d(a, \Delta /4)\cap A).
\end{equation}  

Before delving into the details of Bartal's lemma,
we give a quick and intuitive explanation of it and its use 
in the proof of Theorem~\ref{thm:um-skeleton-regular}.
See also \cite[\S1.2]{mendel2021simple} for a similar 
explanation in a somewhat simpler setting.
Roughly speaking, Corollary~\ref{cor:bar-ramsey-decomp-2}  below
states that any compact metric measure space $(X,d,\mu)$ of diameter $\Delta=\diam(X)$, contains two non-empty closed subsets $P,Q\subset X$ that are separated, i.e., $d(P,Q)\ge \Delta/(8t)$
for some fixed $t\in\{2,3,\ldots\}$, and
\begin{equation} \label{eq:bar-explanation}
\frac{\mu(P)}{\mu^{\Delta/2}(P)^{1/t}} + \frac{\mu(Q)}{\mu^\Delta(Q)^{1/t}} \ge \frac{\mu(X)}{\mu^\Delta(X)^{1/t}}.
\end{equation}
Recursive  applications of~\eqref{eq:bar-explanation} creates a hierarchy of subsets.
The separation property implies that the "surviving" set of points, $C$, forms a Cantor-like subset.
I.e., $C$ is biLipschitz equivalent to an ultrametric,
and its ultrametric distortion is $8t$. 
In doubling space, $\mu^\Delta(X)\asymp \mu(X)$,
and therefore~\eqref{eq:bar-explanation} implies, 
roughly speaking, that
$\xi=\mu/\mu^\Delta$ is a sub-measure controlled from above by $\mu^{1-1/t}$. 
By ``pruning" the set $C$ a bit further we can obtain a subset $S\subset C$ on which $\xi$ is additive.
To control $\xi$  from below using $\mu^{1-1/t}$, 
we should ensure that the subsets $P$ and $Q$ in~\eqref{eq:bar-explanation} and all the subsets created from iterative applications of~~\eqref{eq:bar-explanation} contain ``small" balls of $X$. 
To achieve that, we do not actually  apply~\eqref{eq:bar-explanation} directly on subsets of $X$, 
and instead opt to apply  it on (a hierarchy of) metric nets  $\mathcal N$ of $X$, 
where each of the net's points represents its Voronoi cell.
Since the diameter of $\mathcal N$ may be slightly smaller than the diameter of $X$ when applying~\eqref{eq:bar-explanation} on $\mathcal N$, we actually use $\Delta\approx\diam(\mathcal N)+``\e"$ instead of $\Delta=\diam(\mathcal N)$, which explains why $\Delta$ is a parameter in~\eqref{eq:bar-explanation}.

We next continue with rigorous definitions and arguments.
The following properties of $\mu^\Delta$ are straightforward (for a proof of~\eqref{eq:mu-mu*} see~\cite{mendel2021simple}):
\begin{proposition} 
\label{prop:mu-Delta-prop}
Let $0<\delta\le \Delta$, and $A\subseteq C\subseteq X$.
Then $\mu^\delta(A)\le \mu^\Delta(C)$.
If $X$ is $\lambda$-doubling, and $\Delta\ge\diam(A)$, then
\begin{equation}
\label{eq:mu-mu*}
 \mu^\Delta(A)\le \mu(A) \le \lambda^{2}\mu^{\Delta}(A).
\end{equation}
\end{proposition}

\begin{lemma}
\label{lem:bar-ramsey-decomp}
Let $(X,d)$ be a compact metric space, and let $\mu$ be a finite Borel measure on $X$. 
For any  compact subset $\emptyset\ne Z\subset X$,
$0<\Delta < 2\diam_d(Z)$ and integer $t\in\{2,3,\ldots\}$, there exist non-empty disjoint and compact subsets $P,Q\subset Z$ that, denoting $\bQ=Z\setminus Q$, satisfy
$d(P,Q)\ge \Delta /(8t)$,
$\diam_d(\bQ)\le \Delta/2$,
$\diam_d(P)\le \bigl( \frac12 - \frac1{4t}\bigr) \Delta$, and
\begin{equation} \label{eq:bar-ramsey-decomp}
\mu(P) \ge \mu (\bQ)\cdot \left(\frac{\mu^{\Delta/2} (\bQ)}{\mu^\Delta(Z)}\right)^{1/t},
\end{equation}
where we use the convention $0/0=0$ in~\eqref{eq:bar-ramsey-decomp}.
\end{lemma}
\begin{proof}
The statement of the Lemma is vacuous when $\mu(Z)=0$, so we
assume that $\infty>\mu(Z)>0$.
With the convention $0/0=0$, let $x\in Z$ be a point that maximizes%
\footnote{Indeed, the maximum here and below exists. See~\cite[Remark~3]{mendel2021simple} for a straightforward proof.}
\[
\frac{\mu (B(x,\Delta/8)\cap Z)} {\mu(B^o(x,\Delta/4)\cap Z)}.
\]
With this choice, $\mu (B(x,\Delta/8)\cap Z)>0$.
For $i\in\{0,1,\ldots,t-1\}$, let $H_i=B(x,(1+i/t)\Delta/8)\cap Z$, and also define
$H_t=B^o(x,\Delta/4)\cap Z$.
Clearly there exists $i\in\{1,\ldots,t\}$ for which
\begin{equation}\label{eq:light-ring}
\mu(H_i)\le \mu(H_{i-1}) \cdot \biggl(\frac{\mu(H_t)}{\mu(H_0)}\biggr)^{1/t}.
\end{equation}
We then set $P=H_{i-1}$, 
$\bQ=B^o_d(x,(1+i/t)\Delta/8)\cap Z$, 
and $Q=Z\setminus \bQ$.
Observe that 
$P$ and $Q$ are closed, $\mu(P)>0$,
and
$H_i\supseteq \bQ\supseteq P$.
By the triangle inequality, for every $a\in P$, and $b\in Q$, $d(a,b)\ge d(b,x)-d(a,x)\ge \Delta/(8t)$.
The measure satisfies
\begin{equation} \label{eq:light-ring-2}
\mu(P) \stackrel{\eqref{eq:light-ring}}{\ge} 
\mu(\bQ) \cdot \biggl(\frac{\mu(B(x,\Delta/8)\cap Z)}{\mu(B^o(x,\Delta/4)\cap Z)}\biggr)^{1/t}.
\end{equation}

Let $u\in \bbQ\subseteq B(x,(1+i/t)\Delta/8 )$ the point that maximizes
$\mu(B(u,\Delta/8)\cap \bbQ)$,
where $\bbQ$ is the topological closure of $\bQ$.
Since 
$B(u,\Delta/8)\cap \bbQ\subseteq B(u, \Delta/8 )\cap Z$,  we have
$\mu^{\Delta/2}(\bbQ)\le \mu(B(u,\Delta/8)\cap Z)$.
Also,
$\mu^\Delta(Z)\ge \mu(B^o(u,\Delta/4)\cap Z)$.
From the definition of $x$
we therefore have
\[
\frac{\mu(B(x,\Delta/8)\cap Z)}{\mu(B^o(x,\Delta/4)\cap Z)}
\ge 
\frac{\mu(B(u,\Delta/8)\cap Z)}{\mu(B^o(u,\Delta/4)\cap Z)}
\ge 
\frac{\mu^{\Delta/2}(\bbQ)}{\mu^\Delta(Z)} 
\ge \frac{\mu^{\Delta/2}(\bQ)}{\mu^\Delta(Z)} . 
\]
Applying the above inequality to~\eqref{eq:light-ring-2},
we obtain~\eqref{eq:bar-ramsey-decomp}. 
\end{proof}

We use Lemma~\ref{lem:bar-ramsey-decomp} via the following corollary.

\begin{corollary}\label{cor:bar-ramsey-decomp-2}
Let $(X,d)$ be a compact metric space and let $\mu$ be a Borel probability measure on $X$.
For any closed subset $Z\subset X$, $\mu(Z)>0$, and integer $t\in\{2,3,\ldots\}$, there exist closed subsets $P, Q\subseteq Z$ such that 
$\mu(P)>0$,
$d(P,Q)\ge \Delta /(8t)$,
$\diam_d(Z\setminus Q)\le \Delta/2$,
$\diam_d(P)\le \bigl( \frac12 - \frac1{4t}\bigr) \Delta$, and
\begin{equation} \label{eq:bar-ramsey-decomp-2}
\frac{\mu(P)}{\mu^{\Delta/2}(P)^{1/t}} + \frac{\mu(Q)}{\mu^\Delta(Q)^{1/t}} \ge \frac{\mu(Z)}{\mu^\Delta(Z)^{1/t}}.
\end{equation}
where $\mu^\Delta$ is defined in~\eqref{eq:mu-star}, 
and with the convention $0/0=0$.
\end{corollary}
\begin{proof}
Denote $\bQ=Z\setminus Q$.
Applying Lemma~\ref{lem:bar-ramsey-decomp} on $Z$, 
\begin{multline*}
\frac{\mu(P)}{\mu^{\Delta/2}(P)^{1/t}} + \frac{\mu(Q)}{\mu^\Delta(Q)^{1/t}} \ge
\frac{\mu(P)}{\mu^{\Delta/2}(\bQ)^{1/t}} + \frac{\mu(Q)}{\mu^{\Delta}(Z)^{1/t}} \\
\stackrel{\eqref{eq:bar-ramsey-decomp}}\ge
\frac{\mu(\bQ)}{\mu^{\Delta/2}(\bQ)^{1/t}} \cdot \frac{\mu^{\Delta/2}(\bQ)^{1/t}}{\mu^\Delta(Z)^{1/t}}
+  \frac{\mu(Q)}{\mu^\Delta(Z)^{1/t}}
= \frac{\mu(Z)}{\mu^\Delta(Z)^{1/t}}. 
\end{multline*}
The first inequality above is obtained using Proposition~\ref{prop:mu-Delta-prop}
and recalling that $P\subseteq Q^c$, and $Q\subseteq Z$.
\end{proof}

\section{Net-trees and ultrametrics in compact spaces}
\label{sec:net-trees}

As explained at the beginning of Section~\ref{sec:bartal}, Bartal's Ramasey decomposition lemma will be employed on
an auxiliary hierarchy of \emph{metric nets}, the existence of which should be considered a folklore.
It is mostly similar to (the easy part of) Christ's dyadic decomposition~\cite{Christ} for  spaces with doubling measure, or to the \emph{net-tree}~\cite{HPM06} for finite spaces. 
For completeness, we provide here a self-contained
treatment for compact spaces. 

\begin{definition}[Rooted trees]
\label{def:tree}
A rooted tree $\T$ is a set of vertices with a distinguished vertex $\rootv=\rootv_\T \in \T$ called the \emph{root}. 
Every vertex $u\in \T\setminus \{\rootv\}$, except $\rootv$, has a unique \emph{parent} $\p(u)\in\T$.
The $k$-th ancestor $\p^{(k)}(u)$(if exists) of a vertex $u$ is defined inductively, $\p^{0}(u)=u$, and 
$\p^{(k+1)}(u)=\p(\p^{(k)}(u))$. 
The set of ancestors of $u$ is written as $\p^{(*)}(u)=\{\p^{(0)}(u),\p^{(1)}(u),\p^{(2)}(u),\ldots\}$.
For every vertex $u\in\T$, the sequence of ancestors $u=\p^{(0)}(u),\p^{(1)}(u),\ldots$ is finite and ends with the root $\rootv$. 
If $u=\p(v)$ then $v$ is called a child of $u$. 
The set of children of a vertex $u$ is denoted $\p^{-1}(u)$ and must be finite and non-empty (i.e., there are no leaves).
For $u\in \T$, we denote by $\T_u=\{v\in\T: u\in\p^{(*)}(v)\}$ the set of (weak) descendants of $u$ in $\T$.

For every two vertices $u,v\in\T$ we denote by $u\wedge v\in\T$ their \emph{least common ancestor}, i.e., 
$u\wedge v\in \p^{(*)}(u)\cap \p^{(*)}(v)$, and if $w\in \p^{(*)}(u)\cap \p^{(*)}(v)$ then $w\in \p^{(*)}(u\wedge v)$.

A \emph{branch}  is an infinite sequence of vertices $b=(\rootv=u_0,u_1,u_2,\ldots)$ that begins with the root
and has $u_i=\p(u_{i+1})$. 
The least common ancestor extends to branches: for two branches $x\ne y$,
$x\wedge y$ is the deepest common vertex $v\in x\cap y$.
The \emph{tree boundary} $\tilde\partial(u)=\tilde\partial_\T (u)$ of a vertex $u\in \T$ is defined
as the set of branches that contain $u$.
\end{definition}

For a function $f:\T\to X$,
and a branch $b=(v_0,v_1,\ldots)\in\tilde\partial_\T (\rootv)$, we sometimes use the notation
$\lim_{v\to b}f(v)=\lim_{i\to\infty}f(v_i)$.

\begin{definition}[subtree]
Fix a tree $\T$ and vertex $u\in \T$.
A subset $\Stree\subset\T$, is called a subtree of $\T$ rooted at $u$ if the induced parent relation 
of $\T$ on $\Stree$ forms a rooted tree (according to Definition~\ref{def:tree}) whose root is $u$. 
For example, $\T_u$ is a subtree of $\T$ rooted in $u\in\T$. 
\end{definition}

\begin{definition}[Net-tree] \label{def:net-tree}
Let $(X,d)$ be a metric space, and $\kappa\ge 1$.
A $\kappa$-\emph{net-tree} over $X$ is a
rooted tree $\Ntree$  
(in the sense of Definition~\ref{def:tree})
with the root being $\rootv=\rootv_\Ntree$,
together with:
\begin{description}
\item [\quad Labels for vertices] $\Delta:\Ntree \to [0,\infty)$.
\item [\quad Representative points for vertices]
$\rep{\cdot}:\Ntree\to X$.
\end{description}

The net-tree should satisfy the following conditions:
\begin{description}
\item [\quad Monotone labels]
$\Delta(u)\le \Delta(\p(u))$ for every $u\in \Ntree\setminus\{\rootv\}$.
\item [\quad Vanishing labels]
 $\lim_{v\to b}\Delta(v)=0$, for every $b\in\tilde\partial(\rootv)$.

\item[\quad Covering] 
For every vertex $u\in \Ntree$,
\[
\{\rep{w}: w\in \T_u\}\subseteq B_d\bigl({\rep{u}, \Delta(u) }\bigr).
\]

\item [\quad Packing] For every 
$u,v\in \Ntree$, if $u\wedge v \notin \{u,v\}$ (i.e., no ancestor/descendant relation) and $\Delta(v)\le \Delta(u)$,
then 
\[ B_d(\rep{u},\Delta(\p(u))/\kappa)\cap \{\rep{w}: w\in \T_v\}=\emptyset.\]
\end{description}
We call $\Ntree$ a \emph{net-tree} over $X$ if it is $\kappa$-net-tree over $X$ for some $\kappa\in[1,\infty)$.
\end{definition}

For a vertex $u\in\Ntree$ with $\Delta(u)>0$ and $\delta\in(0,\Delta(u)]$, 
we further define the $\delta$-\emph{descendants} of $u$ as
\[
\tvb(u,\delta)=\tvb_\Ntree(u,\delta)= 
\begin{cases}
\{u\} & \delta=\Delta(u)\\
\{
v\in\T_u:\;  \Delta(v)\le \delta<\Delta(\p(v))\}
& \delta\in(0,\Delta(u)).
\end{cases}
\]

By a straightforward induction $\tvb(u,\delta)$ is finite.

\begin{proposition}
Let $\Ntree$ be a net-tree over $(X,d)$.
For every branch $b\in\tilde\partial(\rootv)$,  
$\lim_{v\to b} \rep{v}$ exists.
\end{proposition}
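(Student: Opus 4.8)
The plan is to fix the branch once and for all, show that the representative points of its vertices form a Cauchy sequence in $X$, and then invoke completeness of $X$ (which is available because $X$ is compact) to obtain the limit. Write $b=(\rootv=u_0,u_1,u_2,\ldots)$, so that $u_i=\p(u_{i+1})$ for all $i\ge 0$; this sequence of vertices is uniquely determined by $b$, so no arbitrary choices are involved and it suffices to prove that $(\rep{u_i})_{i\ge 0}$ converges.

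The key estimate I would establish is that for all $j\ge i$,
\[
d\bigl(\rep{u_i},\rep{u_j}\bigr)\le\Delta(u_i).
\]
Indeed, for $j\ge i$ the vertex $u_j$ lies below $u_i$ on the branch, i.e.\ $u_i\in\p^{(*)}(u_j)$, so $u_j\in\tilde\vb(u_i)$ and hence $\rep{u_j}\in\vb(u_i)$ (the case $j=i$ included, since $u_i\in\tilde\vb(u_i)$). By the Covering condition of Definition~\ref{def:net-tree} we have $\vb(u_i)\subseteq B_d(\rep{u_i},\Delta(u_i))$, which gives the displayed bound. Then I would invoke the Vanishing-labels condition: because $b\in\tilde\partial(\rootv)$, one has $\lim_{i\to\infty}\Delta(u_i)=\lim_{v\to b}\Delta(v)=0$. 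So for any $\e>0$ pick $N$ with $\Delta(u_N)<\e$; for $N\le i\le j$ the displayed bound together with the Monotone-labels condition gives $d(\rep{u_i},\rep{u_j})\le\Delta(u_i)\le\Delta(u_N)<\e$. Hence $(\rep{u_i})_{i\ge 0}$ is Cauchy, and since $X$ is compact (in particular complete) it converges; its limit is, by definition, $\lim_{v\to b}\rep{v}$.

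There is no serious obstacle here. The one point to get right is to bound $d(\rep{u_i},\rep{u_j})$ in a single stroke by $\Delta(u_i)$ via the Covering property, rather than trying to telescope over the one-step estimates $d(\rep{u_i},\rep{u_{i+1}})\le\Delta(u_i)$, since $\sum_i\Delta(u_i)$ may well diverge. A minor secondary point is that the argument uses completeness of the ambient space, which is where compactness of $X$ enters (alternatively one could extract a convergent subsequence by compactness and use that a Cauchy sequence with a convergent subsequence converges).
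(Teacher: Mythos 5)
Your proof is correct and rests on exactly the same facts as the paper's: the Covering property forces every later representative on the branch into $B_d(\rep{u_i},\Delta(u_i))$, and the Vanishing-labels condition sends these radii to $0$. The paper packages this as a nested sequence of compact balls and applies the Cantor intersection theorem, whereas you phrase it as a Cauchy-sequence argument using completeness of the (compact) space; this is only a cosmetic difference.
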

\begin{proof}
Fix a branch $b=(\rootv=v_0,v_1,v_2,\ldots)\in\tilde\partial(\rootv)$. 
Let $D_n=\bigcap_{i=0}^n B_d(\rep{v_i},\Delta(v_i))$.
$(D_n)_{n\ge 0}$
is a decreasing sequence of compact subsets with diameters that tend to~$0$. 
By the covering property, $\rep{v_n}\in D_n\ne \emptyset$.
Hence, by Cantor intersection theorem, 
$\bigcap_{n=0}^\infty D_n=\bigcap_{i=0}^{\infty} B_d(\rep{v_i},\Delta(v_i))$ 
is a singleton which is the limit of $\rep{v_n}$.
\end{proof}

The above proposition allows us to extend the notion of representative points to branches.

\begin{definition}[Boundary and Surjection]
Let $\rep{\cdot}:\tilde \partial_\Ntree(\rootv)\to X$, defined by $\rep{b}=\lim_{v\to b}\rep{v}$.
The \emph{boundary} of a vertex $v$,
$\partial(v)=\partial_\Ntree(v)=\{\rep{b}:\; b\in\tilde\partial(v)\}$, is the set of points represented by the branches that contain $v$.
The image of the net-tree $\Ntree$ is defined as the boundary of the root $\Image(\Ntree)=\partial_\Ntree(\rootv_\Ntree)$.
A net-tree $\Ntree$ over a metric space $(X,d)$ is called \emph{surjective} if $\Image(\Ntree)=X$.
\end{definition}

\begin{proposition}\label{prop:net-tree-prop}
Further properties of a net-tree $\Ntree$ over a compact space $(X,d)$:
\begin{enumerate}[label=(\Alph*)]
\item \label{it:nt-prop-A}
For every $u\in \Ntree$, $\partial(u)\subseteq  B_d\bigl({\rep{u}, \Delta(u) }\bigr)$.
\item \label{it:nt-prop-B}
A subtree $\Stree \subset \Ntree$ is also a net-tree.
\item \label{it:nt-prop-C}
For every $u\in \Ntree$, $\partial(u)$ is compact.
In particular, $\Image(\Ntree)$ is compact.
\end{enumerate}
\end{proposition}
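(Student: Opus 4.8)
The plan is to prove the three parts in the order stated, using only the axioms of Definition~\ref{def:net-tree}, the finite-branching property built into Definition~\ref{def:tree}, and---for part~(C)---compactness of $X$. Part~(A) is immediate from the fact that closed balls are closed: if $x\in\partial(u)$ then $x=\rep{b}=\lim_{v\to b}\rep{v}$ for some branch $b$ through $u$, and every vertex $v$ of $b$ at or below $u$ is a (weak) descendant of $u$, so $\rep{v}\in\vb(u)\subseteq B_d(\rep{u},\Delta(u))$ by the covering property; since $B_d(\rep{u},\Delta(u))$ is closed, the limit $x$ lies in it as well. Taking $u=\rootv_\Ntree$ gives the ``in particular'' clause.

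For part~(B) the first step is to unwind what a subtree is: for the parent relation induced from $\Ntree$ on $\Stree$ to be a rooted tree with root $u$, every vertex of $\Stree$ other than $u$ must have its $\Ntree$-parent in $\Stree$; hence $\Stree$ is closed under passing to $\Ntree$-parents up to $u$, and in particular $\Stree\subseteq\tilde\vb_\Ntree(u)$. From this I get three bookkeeping facts: $\parent_\Stree=\parent_\Ntree$ on $\Stree\setminus\{u\}$; $\tilde\vb_\Stree(w)\subseteq\tilde\vb_\Ntree(w)$ (so $\vb_\Stree(w)\subseteq\vb_\Ntree(w)$) for every $w\in\Stree$; and two vertices of $\Stree$ are comparable in $\Stree$ if and only if they are comparable in $\Ntree$. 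Granting these, each of the four net-tree conditions for $\Stree$ follows from the corresponding one for $\Ntree$ with the \emph{same} parameter $\kappa$: monotonicity of the labels because $\parent_\Stree=\parent_\Ntree$; vanishing of the labels because a branch of $\Stree$ coincides, from $u$ onward, with a branch of $\Ntree$, along which the labels already vanish; the covering inequality because $\vb_\Stree(w)\subseteq\vb_\Ntree(w)$; and the packing inequality because incomparable $u',v'\in\Stree$ are incomparable in $\Ntree$, $\Delta(\parent_\Stree(u'))=\Delta(\parent_\Ntree(u'))$, and $\vb_\Stree(v')\subseteq\vb_\Ntree(v')$, so the ball that $\Stree$'s packing condition demands be disjoint from $\vb_\Stree(v')$ is exactly the one that $\Ntree$'s packing condition already keeps disjoint from $\vb_\Ntree(v')$.

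For part~(C), since $X$ is compact it suffices to prove that $\partial(u)$ is closed. Let $x_n=\rep{b_n}\in\partial(u)$ with $x_n\to x\in X$. Because the tree is finitely branching, a diagonal (K\"onig-type) argument produces a branch $b=(u=u_0,u_1,u_2,\dots)$ through $u$ with the property that for every $m$ there are infinitely many indices $n$ for which $b_n$ passes through $u_m$. For each such $n$, part~(A) applied to $u_m$ gives $x_n\in\partial(u_m)\subseteq B_d(\rep{u_m},\Delta(u_m))$; letting $n\to\infty$ along those indices, and using that a closed ball is closed, yields $x\in B_d(\rep{u_m},\Delta(u_m))$, while part~(A) also gives $\rep{b}\in B_d(\rep{u_m},\Delta(u_m))$. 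Hence $d(x,\rep{b})\le 2\Delta(u_m)$ for every $m$, and since $b$ is a genuine infinite branch the vanishing-labels property forces $\Delta(u_m)\to 0$; therefore $x=\rep{b}\in\partial(u)$, which proves closedness.

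The routine parts are (A) and the checklist in (B); the one step needing care is the diagonal extraction in part~(C), where one must keep infinitely many indices $n$ alive at every level---this is precisely what stops the limit $x$ from escaping $B_d(\rep{u_m},\Delta(u_m))$---and must check that the vertices $u_m$ chosen along the way really do assemble into a single infinite branch, which is what legitimizes invoking the vanishing-labels property at the end. Equivalently, part~(C) can be packaged as the assertion that $\rep{\cdot}$ is a continuous surjection from the space of branches through $u$---compact because the tree is finitely branching---onto $\partial(u)$, with modulus of continuity $d(\rep{b},\rep{b'})\le 2\Delta(u_m)$ whenever $b$ and $b'$ agree on their first $m$ vertices at or below $u$; this route again relies on part~(A).
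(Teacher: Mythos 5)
Your proposal is correct and follows essentially the same route as the paper: (A) via the covering property plus closedness of the ball, (B) by checking that the four net-tree axioms are inherited by a subtree, and (C) by the same pigeonhole/K\"onig descent through the finitely branching tree using the vanishing labels. The only cosmetic difference is in (C), where you prove $\partial(u)$ is sequentially closed and then invoke compactness of $X$, while the paper runs the identical extraction on an arbitrary infinite subset of $\partial(u)$ to produce an accumulation point, i.e.\ proves compactness directly.
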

\begin{proof}
Item~\ref{it:nt-prop-A}: Fix $b\in \tilde\partial(u)$. By the covering property, for almost all $v\in b$, $\rep{v}\in B_d(\rep{u},\Delta(u))$, and since this ball is closed, we conclude that it is also contains
$\lim_{v\to b}\rep{v}$.

Item~\ref{it:nt-prop-B}: A subtree is also a net-tree since the four required properties: monotone and vanishing labels, covering, and packing, are closed for subtrees.

Item~\ref{it:nt-prop-C}: Next we prove that $\partial(u)$ is compact.
Assume $\partial(u)$ is infinite (otherwise, it is trivially compact). 
Fix an infinite $A\subseteq \partial(u)$. We should prove that $A$ has an accumulation point in $\partial(u)$. 
To achieve it, we construct an infinite non-increasing sequence of infinite subsets $A=A_1\supseteq A_2\supseteq A_3\supseteq \ldots$ and an infinite  sequence of vertices $u=v_1,v_2, v_3,\ldots$ in $\Ntree$ satisfting 
$A_i\subseteq \partial(v_i)$, and $v_{i+1}\in\p^{-1}({v_{i}})$, 
The construction is by induction.
The base case $v_1=u$, $A_1=A\subseteq\partial(u)$ holds by assumption. 
Assume we have already defined $A_1,\ldots, A_n$ and $v_1,\ldots,v_n$ to satisfy the above when $i\in\{1,\ldots,n-1\}$. 
Since $\p^{-1}(v_n)$ is
finite
and
\[
A_n=A_n\cap \partial(v_n)=\bigcup_{w\in\p^{-1}(v_n)}
(A_n\cap\partial(w))
\]
is infinite,
there must be $w\in \p^{-1}(v_n)$ for which
$A_n\cap \partial(w)$ is infinite. 
Define $v_{n+1}=w$ and
$A_{n+1}=A_n\cap\partial(v_{n+1})$.
The sequence $(v_n)_n$ is a
suffix of a unique branch $b\in\tilde\partial(u)$.
For every $n\in\mathbb N$ we have, by the covering property,
\[
\emptyset\ne A_n\subseteq A\cap \partial(v_n)\subseteq B(\rep{v_n},\Delta(v_n))\ni\rep{b}.
\]
Hence, $B_d(\rep{b},2\Delta(v_n))\cap A \supseteq A_n\ne \emptyset$.  Since $\Delta(v_n)\to 0$, it means that 
$\rep{b}\in\partial(u)$ is an accu\-mu\-lation point of~$A$.
\end{proof}

The mapping $\rep{\cdot}:\tilde\partial(\rootv) \to X$ is not necessarily injective.
In particular, it means that $\partial(u)=\bigcup_{v\in\p^{-1}(u)}\partial(v)$, but the union is not necessarily disjoint.
We (partially) remedy this problematic aspect 
by using \emph{partial boundaries}.

\begin{proposition} \label{prop:partial-boundary}
Let $\Ntree$ be  a $\kappa$-net-tree over a metric space $(X,d)$.
There exists a partial boundary $\pb=\pb_\Ntree:\Ntree \to 2^X$ with the following properties:
\begin{enumerate}[label=(\Alph*)]
\item $\pb(u)\subseteq \partial(u)$, for every $u\in \Ntree$.
\item $\pb(\rootv)=\partial(\rootv)=\Image(\Ntree)$.
\item \label{it:disj-union}
The partial boundary of a vertex $u$
 is a \emph{disjoint union} of the partial boundary of its children, i.e.
\begin{equation}\label{eq:pb-dis-union}
\pb(u)=\biguplus_{v\in\p^{-1}(u)}\pb(v).
\end{equation}
\item Every partial boundary $\pb(u)$ is a Borel subset.
\item \label{it:partial-packing}
$B^o_d(\rep{u},\Delta(\p(u))/\kappa)\cap \Image(\Ntree)\subseteq \pb(u)$, for every $u\in\Ntree$.
\end{enumerate}
\end{proposition}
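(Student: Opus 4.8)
The plan is to select, for every point $x\in\Image(\Ntree)$, a canonical branch $b(x)\in\tilde\partial(\rootv)$ with $\rep{b(x)}=x$, and then to define $\pb(u)\eqdef\{x\in\Image(\Ntree):\ u\in b(x)\}$. Since $u\in b(x)$ forces $x=\rep{b(x)}\in\partial(u)$, property~(A) is immediate; since the root lies on every branch, property~(B) is immediate; and since a branch passing through $u$ contains exactly one child of $u$ (its successor on the branch), every $x\in\pb(u)$ lies in $\pb(v)$ for exactly one $v\in\p^{-1}(u)$, which is the disjoint union~\eqref{eq:pb-dis-union}. The remaining content is to choose $b(\cdot)$ so that each $\pb(u)$ is Borel, and to verify property~(E).

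Concretely, I will fix for each $u$ an arbitrary enumeration $v_1^u,\ldots,v_{k_u}^u$ of the finite set $\p^{-1}(u)$ and build $b(x)$ greedily from the root downward: from a vertex $u$ already on $b(x)$, descend to $v_i^u$ where $i$ is least with $x\in\partial(v_i^u)$; this is legitimate because $\partial(u)=\bigcup_i\partial(v_i^u)$. Unwinding the definition, $\pb$ obeys the recursion $\pb(\rootv)=\Image(\Ntree)$ and $\pb(v_i^u)=\bigl(\pb(u)\cap\partial(v_i^u)\bigr)\setminus\bigcup_{j<i}\partial(v_j^u)$. Property~(D) then follows by induction on the depth of $u$: $\Image(\Ntree)$ and each $\partial(v_j^u)$ are compact, hence Borel, by Proposition~\ref{prop:net-tree-prop}\eqref{it:nt-prop-C}, and the recursion applies only finitely many Boolean operations while every vertex has finite depth.

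For property~(E) take $u\ne\rootv$ (for $u=\rootv$ it is trivial since $\pb(\rootv)=\Image(\Ntree)$). The key step is to promote the Packing property from the point set $\vb(v)$ to the boundary set $\partial(v)$: if $v$ is incomparable with $u$, then $B^o_d(\rep{u},\Delta(\p(u))/\kappa)\cap\partial(v)=\emptyset$. Indeed, any point of $\partial(v)$ is a limit, along a branch through $v$, of representatives $\rep{w}$ of descendants $w$ of $v$, and each such $\rep{w}$ satisfies $d(\rep{w},\rep{u})\ge\Delta(\p(u))/\kappa$ by Packing; passing to the limit keeps this inequality, so the limit avoids the open ball. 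Consequently, if $x\in B^o_d(\rep{u},\Delta(\p(u))/\kappa)\cap\Image(\Ntree)$, then no vertex incomparable with $u$ can lie on any branch $b$ with $\rep{b}=x$; comparing depths, the vertex of $b$ at the depth of $u$ is comparable with and of the same depth as $u$, hence equals $u$, so $b$ passes through $u$. In particular $b(x)$ passes through $u$, whence $x\in\pb(u)$.

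The main obstacle is the bookkeeping in property~(E): one must keep track of the fact that Packing bounds the representatives $\rep{w}$ of \emph{descendants} rather than the branch limits directly, and that it is precisely the openness of $B^o_d(\rep{u},\Delta(\p(u))/\kappa)$ that lets the limit inequality $d(\rep{b},\rep{u})\ge\Delta(\p(u))/\kappa$ be turned into ``$x$ is not in the open ball''. Everything else — measurability in~(D) given Proposition~\ref{prop:net-tree-prop}\eqref{it:nt-prop-C}, and properties~(A)--(C) — is formal once $b(\cdot)$ is in place.
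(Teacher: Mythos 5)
Your proposal is correct and essentially reproduces the paper's construction: unwinding your greedy canonical-branch definition gives exactly the paper's top-down recursion $\pb(v_i)=\pb(u)\cap\bigl(\partial(v_i)\setminus\bigcup_{j<i}\partial(v_j)\bigr)$, and items (A)--(D) are handled the same way. For item (E) you locate the relevant vertex on the branch by depth rather than via the paper's set $B$ of label-crossing incomparable vertices, but the substance coincides; in fact your explicit limit argument promoting the packing property from $\vb(v)$ to $\partial(v)$ (using the openness of $B^o_d(\rep{u},\Delta(\p(u))/\kappa)$) spells out a step the paper leaves implicit.
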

\begin{proof}
The construction of the partial boundary is done inductively
on the net-tree from the root downward.
First we define $\pb(\rootv)=\partial(\rootv)=\Image(\Ntree)$.
Assume inductively  that $\pb(u)\subseteq \partial(u)$ has been defined and is a Borel set. 
Let
$\p^{-1}(u)=\{v_1,\ldots, v_m\}$ be the children of $u$ in some
arbitrary order. 
Define 
\[
\pb(v_i)=\pb(u)\cap \bigl(
\partial(v_i) \setminus
\bigl(\partial(v_1)\cup \ldots \cup \partial(v_{i-1})\bigr)
\bigr).
\]
Obviously $\pb(v_i)\subseteq \partial(v_i)$.
Since $\pb(u)$ is a Borel set, and so are $\partial(v_1),\ldots,\partial(v_i)$ (which are closed according to Proposition~\ref{prop:net-tree-prop}),
we conclude that $\pb(v_i)$ is also a Borel set.
Equality~\eqref{eq:pb-dis-union} follows immediately.

Lastly, we prove Item~\ref{it:partial-packing}.
Fix $u\in\Ntree$, and let 
\[B=\Bigl\{v\in \Ntree:\; u\wedge v\notin\{u,v\}
\text{ and } \Delta(v)<\Delta(u)\le \Delta(\p(v)) 
\Bigr\}.
\]
We claim that
$\tilde\partial(\rootv)=\tilde\partial(u)\cup
\bigcup_{v\in B} \tilde\partial(v)$.
Indeed, fix an arbitrary branch $b\in\tilde\partial(\rootv)$. 
If $u\in b$ we are done.
Otherwise let $v\in b$ be the first vertex on $b$ for which 
$\Delta(v)<\Delta(u)$ (there must be such a vertex because the labels along $b$ vanish). 
By definition, $v$ is not the root and $\Delta(\p(v))\ge \Delta(u)$. 
From the monotonicity of the labels, $v$ is not a strict ancestor of $u$, but it is also not a descendant of $u$ (otherwise, $u\in b$). Hence
$u\wedge v\notin\{u,v\}$ and so $v\in B$.
From that and an inductive application of equality~\eqref{eq:pb-dis-union} we deduce that
$\Image(\Ntree)= \pb(u) \cup \bigcup_{v\in B} \pb(v)$.
By the packing property of $\Ntree$,
\[B^o_d(\rep{u},\Delta(\p(u))/\kappa)\cap \Big(\bigcup_{\substack{v\in B}} \pb(v) \Big) \subseteq 
B^o_d(\rep{u},\Delta(\p(u))/\kappa)\cap \Big(\bigcup_{\substack{v\in B}} \partial(v) \Big)=\emptyset.
\]
Therefore,
 $B^o_d(\rep{u},\Delta(\p(u))/\kappa)\cap \Image(\Ntree) \subseteq \pb(u) $.
\end{proof}

Equality~\eqref{eq:pb-dis-union} applied inductively, implies that the partial boundary of a vertex is a disjoint union of the partial boundary of $\delta$-descendants. 
Formally, for $u\in\Ntree$ with $\Delta(u)>0$,
and $\delta\in(0,\Delta(u)]$,
$\pb(u)=\biguplus_{v\in \tilde\partial(u,\delta)} \pb(v)$.

Proposition~\ref{prop:net-tree-prop} stated that the image of a net-tree is a compact metric space. 
The next proposition states that the reverse is also true.

\begin{proposition} \label{prop:net-tree-exists}
Every compact metric space has a  surjective $20$-net-tree over it.
\end{proposition}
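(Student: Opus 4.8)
The plan is to build the net-tree from a hierarchy of metric nets, imitating the elementary half of Christ's dyadic decomposition. If $X$ is a single point the claim is trivial (take the tree consisting of a single ray with constant representative), so assume $\diam(X)>0$. Fix a small integer $r\ge 2$ (pinned down at the end by the constants), set $\delta_k:=2\diam(X)\cdot r^{-k}$ for $k\ge 0$, and use total boundedness of $X$ to build a nested chain of \emph{finite} sets $N_0\subseteq N_1\subseteq\cdots$ where $N_0=\{x_0\}$ is a single point — automatically the maximal $\delta_0$-separated set, as $\delta_0=2\diam(X)$ exceeds every pairwise distance — and $N_k$ is obtained by extending $N_{k-1}$ to a maximal $\delta_k$-separated subset of $X$. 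Maximality makes each $N_k$ a $\delta_k$-net of $X$.

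Let $\Ntree:=\{(c,k):k\ge 0,\ c\in N_k\}$ with root $\rootv:=(x_0,0)$, representatives $\rep{(c,k)}:=c$, and labels $\Delta((c,k)):=\sum_{j\ge k}\delta_j=\tfrac{r}{r-1}\delta_k$. Declare the parent of $(c,k+1)$ to be $(c,k)$ if $c\in N_k$, and otherwise $(\parent(c),k)$, where $\parent(c)\in N_k$ satisfies $d(c,\parent(c))<\delta_k$ (such a point exists since $N_k$ is a $\delta_k$-net) and is chosen so that $\parent(c)$ is \emph{the} (necessarily unique, by $\delta_k$-separation) point of $N_k$ lying within distance $\delta_k/2$ of $c$ whenever such a point exists. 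Then each $(c,k)$ has the child $(c,k+1)$ (no leaves), labels are positive, non-increasing along edges, and tend to $0$ along branches, and $\Delta(\rootv)\ge\diam(X)$, so $\vb(\rootv)\subseteq X\subseteq B_d(x_0,\Delta(\rootv))$. Covering in general is a geometric-series estimate: along any descending chain from $(c,k)$ the representative moves by less than $\delta_j$ from level $j$ to level $j+1$, so every descendant's representative lies within $\sum_{j\ge k}\delta_j=\Delta((c,k))$ of $c$, i.e.\ $\vb((c,k))\subseteq B_d(c,\Delta((c,k)))$.

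Packing is the heart of the matter, and the cleanest way to get it with a good constant is to build, level by level alongside the tree, an auxiliary family of Borel ``cells'' $\{Q_v\}_{v\in\Ntree}$ with $Q_{\rootv}=X$, with $\{Q_w:w\in\parent^{-1}(v)\}$ a partition of $Q_v$, and with the sandwich
\[
B^{\circ}_d(\rep v,c_0\delta_k)\cap X\ \subseteq\ Q_v\ \subseteq\ B_d(\rep v,\delta_k)\qquad\text{for every }v\text{ at level }k,
\]
where $c_0>0$ is an absolute constant. Then cells at a fixed level partition $X$ and cells across levels nest, so cells of incomparable vertices are disjoint; and since each cell contains its own center, representatives of descendants of $v$ lie in the corresponding sub-cells, giving $\vb(v)\subseteq Q_v$ for all $v$. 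Consequently, for incomparable $u$ (at level $k$) and $v$,
\[
B^{\circ}_d\!\big(\rep u,\tfrac1\kappa\Delta(\p(u))\big)\cap\vb(v)\ \subseteq\ \big(B^{\circ}_d(\rep u,c_0\delta_k)\cap X\big)\cap Q_v\ \subseteq\ Q_u\cap Q_v=\varnothing,
\]
as soon as $\tfrac1\kappa\Delta(\p(u))=\tfrac1\kappa\tfrac{r}{r-1}\delta_{k-1}\le c_0\delta_k$, i.e.\ $\kappa\ge\tfrac{r^2}{c_0(r-1)}$. Building the cells follows Christ: when a cell $Q$ with center $c$ is refined, its new centers are the points of $N_{k+1}\cap Q$ — which includes $c$, so $Q_c$ keeps $c$ — and each point of $Q$ is sent to the center prescribed by its ancestor chain of successively coarser net-points; nestedness ($c\in N_{k+1}$) is exactly what lets the sandwich propagate, and the value of $c_0$ that survives every refinement, together with the choice of $r$, is what yields $\kappa=20$. (Alternatively, one may skip the cells and argue directly: for $w$ incomparable with $u=(c,k)$ and at level $m\le k$, $\rep w\in N_m\subseteq N_k$ is $\ne c$ by nestedness, so $d(c,\rep w)\ge\delta_k$; if $w$ is deeper, pass to its level-$(k+1)$ ancestor $(c_{k+1},k+1)$, whose level-$k$ ancestor $c_k\ne c$ gives $d(c,c_k)\ge\delta_k$, while either $c_{k+1}=c_k$ or the ``unique-within-$\delta_k/2$'' rule — as $c\ne\parent(c_{k+1})$ — gives $d(c,c_{k+1})\ge\delta_k/2$; combining with $d(\rep w,c_{k+1})<\Delta((c_{k+1},k+1))$ yields packing with a slightly larger absolute constant.)

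For surjectivity, note that by nestedness every net point $c\in N_k$ represents a branch: follow $(c,k)$ by $(c,k+1),(c,k+2),\dots$, whose representatives are identically $c$. Hence $\bigcup_k N_k\subseteq\Image(\Ntree)$; since $N_k$ is a $\delta_k$-net and $\delta_k\to 0$, this set is dense in $X$, and $\Image(\Ntree)$ is closed by item~\eqref{it:nt-prop-C} of Proposition~\ref{prop:net-tree-prop}, so $\Image(\Ntree)=X$. The step I expect to be the main obstacle is precisely the cell construction with the sharp sandwiching constant $c_0$: forcing $B^{\circ}_d(\rep v,c_0\delta_k)\cap X\subseteq Q_v$ to be preserved under every refinement requires the new centers to sit safely inside their parent cell, which competes with their obligation to form a net of that entire parent cell, and reconciling the two — for the particular scale ratio $r$ — is where the constant $20$ is actually paid for.
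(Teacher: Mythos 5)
Your primary route never actually delivers the step it was designed for. You reduce packing to the existence of Christ-type cells $\{Q_v\}$ satisfying the two-sided sandwich $B^{\circ}_d(\rep{v},c_0\delta_k)\cap X\subseteq Q_v\subseteq B_d(\rep{v},\delta_k)$, and then you explicitly defer that construction, acknowledging that preserving the inner-ball inclusion under refinement is ``where the constant $20$ is actually paid for''. That is precisely the nontrivial content, and as written it is an unproven reduction, not a proof. It is also heavier machinery than the statement requires: the packing condition in Definition~\ref{def:net-tree} only constrains the \emph{representatives} $\vb(v)$, i.e.\ net points, not arbitrary points of $X$, so no measurable cell decomposition is needed at all --- the paper argues directly about net points and the parent-selection rule, which is what your parenthetical fallback does.

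That fallback is essentially the paper's argument and is sound in outline: nestedness handles descendants of $v$ at levels $\le k$ (if $\rep{w}=c$ then $u$ would be a descendant of $w$, forcing $u,v$ comparable), and for deeper $w$ you pass to the level-$(k+1)$ ancestor, play the parent-selection rule against the $\delta_k$-separation, and subtract the covering drift. But with your parameters ($\Delta=\tfrac{r}{r-1}\delta_k$, parent within $\delta_k$ preferring the unique point within $\delta_k/2$) the separation you obtain is $\delta_k/2-\delta_k/(r-1)$ against a packing ball of radius $\tfrac{r^2}{(r-1)\kappa}\delta_k$, which forces $\kappa\ge 2r^2/(r-3)\ge 24$; since a $\kappa$-net-tree with larger $\kappa$ is a \emph{weaker} object (the packing ball shrinks), this does not literally yield a $20$-net-tree, only a $\kappa_0$-net-tree for a universal $\kappa_0$ --- which, to be fair, is all that the application in Lemma~\ref{lem:um-skeleton-regular} uses, and the paper's own arithmetic for $20$ (non-nested $(1-\tau)\tau^{\ell}$-nets with $\tau=1/4$, labels $\Delta=\tau^{\ell}$, parent $=$ nearest point of the coarser net, contradiction because a child of $v$ would be closer to $u$ than to $v$) is itself somewhat generous with the net-separation constant. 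On the credit side, your nested nets make surjectivity a one-liner (every net point is the limit of a constant branch, then density plus closedness of the image via Proposition~\ref{prop:net-tree-prop}), which is genuinely simpler than the paper's decreasing-finite-sets compactness argument. To turn the proposal into a complete proof, drop the cells, promote the direct argument to the main line, and either accept a universal constant in place of $20$ or retune the scales and the parent rule to hit the stated value.
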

\begin{proof}
Fix a compact metric space $(X,d)$. Since $X$ is bounded, by rescaling we may assume without loss of generality that $\diam(X)=1$. 
Let $\tau = 1/4$. 

Recall that $\delta$-net of a metric space $(X,d)$ is a subset $N\subseteq X$ such that $d(x,y)\ge \delta$  for every pair of distinct points $x,y\in N$, and $d(x,N)<\delta$ for every $x\in X$.
Let  $(N_\ell)_{\ell=0}^{\infty}$ be a sequence of $\delta_\ell$-nets of $X$ where 
$\delta_\ell=(1-\tau)\tau^{\ell}$.
For every $\ell\in\{0,1,2,\ldots\}$, and $a\in N_\ell$, we define
a unique vertex $v_{\ell,a}$. 
Define $\mlevel{v_{\ell,a}}=\ell$, and $\rep{v_{\ell,a}}=a$.
When $\ell>0$ the parent of a vertex $v_{\ell,a}$ is
$v_{\ell-1,c}=\p(v_{\ell,a})$, where $c\in N_{\ell-1}$ is the closest point in $N_{\ell-1}$ to $a$,  breaking ties arbitrarily.

Fix $u,v\in \Ntree$.
If  $\p(v)=u$
then by the construction above,
\[d(\rep{v},\rep{u})=\min_{h\in N_{\mlevel{u}}}d(\rep{v},h)\le (1-\tau) \tau^{\mlevel{u}} = \tau^{\mlevel{u}} - \tau^{\mlevel{v}}.\]
By induction and the triangle inequality,
$d(\rep{u},\rep{v})\le \tau^{\mlevel{u}}-\tau^{\mlevel{v}}$
when $u\in \p^{(*)}(v)$.
Define $\Delta(u)=\tau^{\mlevel{u}}$, and
denote $B_u=B_d(\rep{u}, \Delta(u))$.
Using the triangle ineqaulity and the above bound on the distance between representatives, we conclude that if
$u\in \p^{(*)}(v)$ then $B_v\subseteq B_u$.
This, in particular, implies the covering property of $\Ntree$.

Next, we prove surjectivity --- $\Image(\Ntree)=X$. 
Fix $x\in X$,
and for $m\ge \ell\ge 0$ let 
\begin{align*}
N^m_\ell(x)&=\bigl\{\p^{(m-\ell)}(v_{m,a}) \in\Ntree:\;  v_{m,a}\in \Ntree,\  d(x,a)\le {\tau^m} \bigr\} .
\end{align*}
Observe that for a fixed $\ell\ge 0$, 
$(N_\ell^m(x))_{m=\ell}^{\infty}$ is a sequence of finite, non-increasing and non-empty sets.
Indeed, $N_\ell^m(x)\subseteq N_\ell$ which is finite.
It is also non-empty: By construction $N_m$ is $(1-\tau)\tau^m$-net, and hence
there exists a point $a\in N_m$ for which 
$d(x,a)\le (1-\tau)\tau^m \le\tau^m $, and therefore 
$\psi^{(m-\ell)}(v_{m,a})\in N_\ell^m(x)$.
We are left to prove that $N_\ell^{m+1}(x)\subseteq N_\ell^m(x)$ for every $m\ge \ell$.
Fix $u\in N_\ell^{m+1}(x)$. 
There exist $v_{m+1,a}\in\Ntree$ such that
$d(a,x)\le \tau^{m+1}$, and $u=\p^{(m+1-\ell)}(v_{m+1,a})$, so  $x\in B_{v_{m+1,a}}$ 
Denote $\p(v_{m+1,a})=v_{m,c}$.
As we observed above
$B_{v_{m+1,a}}\subseteq B_{v_{m,c}}$, and hence
$d(x,c)\le \tau^{m}$. This means that
$u\in N_{\ell}^m(x)$. 
As this is true for every $u\in N_\ell^{m+1}(x)$, we conclude that 
$N_\ell^{m+1}(x)\subseteq N_\ell^m(x)$.

We can thus define 
$N_\ell^{\infty}(x)=\bigcap_{m=\ell}^{\infty} N_\ell^m(x)\ne \emptyset$.
$N_\ell^{\infty}(x)$ clearly satisfies the following two properties: 
For every $u\in N_\ell^{\infty}(x)$ there exists
$v\in  N_{\ell+1}^{\infty}(x)$ such that $u=\p(v)$; 
and $d(x,\rep{u})\le \tau^{\ell}$.
Hence we can construct a branch $b=(v_0,v_1,\ldots)$
by choosing $v_0=\rootv \in N^\infty_0(x)=\{\rootv\}$, and $v_{i+1}\in N_{i+1}^{\infty}(x)$ a child of $v_i$. 
From the above,
$\lim_{i\to \infty} d(\rep{v_i},x)=0$, and hence $\rep{b}=x$.

Lastly, we prove the  packing property of $\Ntree$.
Let $u,v\in \Ntree$ such that $u\wedge v\notin\{u,v\}$,
and $\ell(v)\ge \ell(u)$.
Let $w\in\Ntree$ such that $\rep{w}\in B^o_d(\rep{u},\Delta(\p(u))/20)=B_d(\rep{u},\tau^{\mlevel{u}}/5)$.
Assume towards a contradiction that  $w\in\T_v$.
Assume first that $\mlevel{u}=\mlevel{v}$. 
Since $u\wedge v\notin\{u,v\}$, we have $v\ne u$.
 Clearly we can rule out $w=v$, since this  directly contradicts the net-tree construction.
Let $v'\in \p^{(*)}(w)\cap \p^{-1}(v)$, be the child of $v$ which is also an ancestor of $w$. Then, by the assumption and the covering property,
\begin{equation*}
d(\rep{u},\rep{v'})\le d(\rep{u},\rep{w})+d(\rep{v'},\rep{w})
\le\bigl(\tfrac{1}{5}+  \tau \bigr)
\tau^{\mlevel{u}}
<\frac12{\tau^{\mlevel{u}}}.
\end{equation*}
By the net property and the triangle inequality
\[
d(\rep{v},\rep{v'})\ge 
d(\rep{v},\rep{u})- d(\rep{u},\rep{v'})
>\tau^{\mlevel{u}}-\frac12 \tau^{\mlevel{u}} =\frac12 \tau^{\mlevel{u}},
\]
which contradicts the construction of $\Ntree$ in which
$d(\rep{v'},\rep{v})\le d(\rep{v'},\rep{u})$.
This proves $B_d(\rep{u},\Delta(\p(u))/20)\cap \{\rep{w}:w\in \T_v\}=\emptyset$ when $\mlevel{u}=\mlevel{v}$.

If $\mlevel{v}>\mlevel{u}$, then let $\tilde v\in \p^{(*)}(v)$ be an ancestor of $v$ such that $\mlevel{\tilde v}=\mlevel{u}$.
Since $v\wedge u\notin\{u,v\}$, necessarily $u\ne \tilde v$. So from the above, 
$B_d(\rep{u},\Delta(\p(u))/20)\cap \{\rep{w}:w\in\T_{\tilde v}\}=\emptyset$. 
Combining it with the assymption that $v\in\p^{(*)}(\tilde v)$ is an ancestor of $\tilde v$, 
we conclude that in this case too, 
$B_d(\rep{u},\Delta(\p(u))/20)\cap  \{\rep{w}:w\in\T_{ v}\}=\emptyset$.
\end{proof}

For compact ultrametrics some net-trees have more structure:

\begin{lemma} \label{lem:um-rooted-tree}
Fix a net-tree $\T$ with the root $\rootv_\T\in\T$ over a compact metric space $(X,d)$. 
The space $(U,\rho)$,
where $U=\tilde\partial(\rootv)$ and 
$\rho(a,b)=\Delta(a\wedge b)$,  is a compact ultrametric.
Furthermore, $\T$ is also surjective $1$-net-tree over $(U,\rho)$ and the mapping $\rep{\cdot}:(U,\rho)\to (X,d)$ is 2-Lipschitz.

In the reverse direction, 
fix a compact ultrametric $(U,\rho)$. 
There exists a surjective $1$-net-tree $\T$ over $U$ for which
$\rho(\rep{a},\rep{b})=\Delta(a\wedge b)$, for every $a,b\in \tilde\partial(\rootv_\T)$.

Furthemore,
$\mathcal{O}_\T=\{\tilde\partial_\T(u): u=\rootv_\T \text{ or } \Delta(u)<\Delta(\p(u))\}$ is the set of open balls in $(U,\rho)$ and the set of closed balls which are not singleton. 
\end{lemma}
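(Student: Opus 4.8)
The plan is to verify, in each direction, the four axioms of Definition~\ref{def:net-tree}; the only non-routine ingredient is a combinatorial fact about least common ancestors in a rooted tree that powers both the strong triangle inequality and the sharp constant $\kappa=1$ in the packing axiom. For the forward direction, the key observation is that for three branches $a,b,c\in\tilde\partial(\rootv)$ the vertices $a\wedge b$ and $b\wedge c$ both lie on $b$ and are therefore comparable; if (after possibly swapping $a$ and $c$) $a\wedge b$ is a weak ancestor of $b\wedge c$, then $a\wedge b$ is also a common vertex of $a$ and $c$ and hence a weak ancestor of $a\wedge c$, so by monotonicity of labels $\Delta(a\wedge c)\le\Delta(a\wedge b)\le\max\{\Delta(a\wedge b),\Delta(b\wedge c)\}$, which is the strong triangle inequality for $\rho$. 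Symmetry and $\rho(a,a)=0$ (from vanishing labels) are clear, and $\rho$ is a genuine metric when $\Delta>0$ on $\Ntree$, as for the net-tree of Proposition~\ref{prop:net-tree-exists} (otherwise it is a pseudo-ultrametric and one passes to the quotient). Compactness of $(U,\rho)$ follows from total boundedness --- for $\delta\in(0,\Delta(\rootv)]$ the finite set $\tilde\vb(\rootv,\delta)$ yields the finite partition $U=\biguplus_{v\in\tilde\vb(\rootv,\delta)}\tilde\partial(v)$ into pieces of $\rho$-diameter at most $\delta$ --- together with completeness, which follows by extracting from a $\rho$-Cauchy sequence, at each scale $\delta_k\downarrow 0$, the vertex of $\tilde\vb(\rootv,\delta_k)$ through which the sequence is eventually funneled; these vertices form a nested chain determining the limit branch.

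Next, keeping the same labels on $\T$ and choosing an arbitrary branch $b_u\in\tilde\partial(u)$ as the representative of each $u\in\T$, I would check that $\T$ is a surjective $1$-net-tree over $(U,\rho)$: monotone and vanishing labels are inherited; covering holds because $b_v$ and $b_u$ both pass through $u$ whenever $v$ is a descendant of $u$, so $\rho(b_v,b_u)=\Delta(b_v\wedge b_u)\le\Delta(u)$; surjectivity holds because $b_{v_i}\to b$ along any branch $b=(v_0,v_1,\ldots)$; and packing with constant $1$ is the combinatorial fact again: if $u\wedge v\notin\{u,v\}$ and $w$ is a descendant of $v$, then $b_u$ and $b_w$ diverge exactly at $m:=u\wedge v$, so $\rho(b_u,b_w)=\Delta(m)\ge\Delta(\p(u))$. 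Finally, $\rep{\cdot}:(U,\rho)\to(X,d)$ is $2$-Lipschitz since the inclusion $\partial(u)\subseteq B_d(\rep{u},\Delta(u))$ of Proposition~\ref{prop:net-tree-prop}, applied to $u=a\wedge b$, gives $d(\rep{a},\rep{b})\le 2\Delta(a\wedge b)=2\rho(a,b)$.

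For the reverse direction, given a compact ultrametric $(U,\rho)$ with $|U|\ge 2$, I would first note that its set of positive distances is finite or a sequence $r_0>r_1>\cdots$ with $r_k\downarrow 0$ --- a compact ultrametric has no strictly positive accumulation point of distances, because $\rho(x_n,y_n)$ is eventually constant once $x_n\to x$ and $y_n\to y$. Pad a finite spectrum so that $r_k$ is defined for every $k\ge 0$ with $r_k\downarrow 0$, and set $r_0=\diam(U)$ (attained, by compactness); then the closed balls of radius $r_k$, being nested or disjoint and finite in number by total boundedness, form a partition $\mathcal{P}_k$ of $U$ with $\mathcal{P}_0=\{U\}$ and $\mathcal{P}_k$ refining $\mathcal{P}_{k-1}$. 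Take $\T$ to have vertex set $\biguplus_{k\ge0}\mathcal{P}_k$ (a vertex being a pair $(k,B)$ with $B\in\mathcal{P}_k$), root $(0,U)$, parent $\p(k+1,B')=(k,B)$ for the unique $B\in\mathcal{P}_k$ containing $B'$, labels $\Delta(k,B)=r_k$, and an arbitrary point of $B$ as the representative of $(k,B)$. Branches correspond to points of $U$ via $x\mapsto (B(x,r_k))_k$, and since $\diam B(x,r_k)\le r_k\to0$ the representative of the branch of $x$ is $x$ itself; this gives surjectivity and, crucially, the exact identity $\rho(\rep{a},\rep{b})=\Delta(a\wedge b)$, independently of the arbitrary choices. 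Strict monotonicity and vanishing of labels, and covering ($\vb(u)\subseteq B\subseteq B_\rho(\rep{u},r_k)$ for $u=(k,B)$), are immediate, while packing with $\kappa=1$ is where the ultrametric enters: for a non-root $u=(k,B)$ the open ball $B^o_\rho(\rep{u},r_{k-1})$ equals $B$ because the spectrum meets $[r_k,r_{k-1})$ only at $r_k$, and if $u\wedge v\notin\{u,v\}$ the ball of $v$ is disjoint from $B$ since two ultrametric balls are nested or disjoint and neither of these two contains the other.

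For the description of $\mathcal{O}_\T$, every vertex of this $\T$ has $\Delta(u)<\Delta(\p(u))$, so $\mathcal{O}_\T=\{\tilde\partial_\T(u):u\in\T\}$, and under the identification of the tree boundary with $U$ one has $\tilde\partial_\T(k,B)=B$; hence $\mathcal{O}_\T=\{B(x,r_k):x\in U,\ k\ge0\}$. I would then check this equals the union of the open balls and the non-singleton closed balls of $U$: each $B(x,r_k)$ is the open ball $B^o_\rho(x,r_{k-1})$ (with $r_{-1}:=2r_0$); conversely any open ball is $B(x,r_k)$ for the largest $r_k$ strictly below its radius, any non-singleton closed ball is $B(x,r_k)$ for the largest $r_k$ at most its radius, and the only $B(x,r_k)$ that are singletons are the $\{x\}$ with $x$ isolated, which are themselves open balls. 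I expect the main obstacle to be exactly this reverse-direction package --- the construction together with the ball bookkeeping --- where one must handle the degenerate cases (finite spectrum, isolated points, singleton balls) and, above all, make the packing axiom hold with the optimal constant $\kappa=1$, which genuinely relies on ultrametric balls forming a laminar family and on consecutive spectrum values leaving no gap into which the open ball could grow.
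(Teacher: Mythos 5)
Your proposal is correct, and on the first assertion it follows the same route as the paper: the strong triangle inequality comes from the identical observation that $a\wedge b$ and $b\wedge c$ lie on $b$, hence are comparable and one of them is a weak ancestor of $a\wedge c$, and the $2$-Lipschitz bound is the covering property applied at $a\wedge b$. Where you genuinely differ is in scope and in two technical choices. The paper proves only the first assertion and cites an earlier reference (\S2 of the ultrametric skeleton paper) for the reverse direction and for the description of $\mathcal{O}_\T$; you instead give a self-contained construction via the tree of closed balls whose radii run over the (padded) distance spectrum $r_0>r_1>\cdots\downarrow 0$, and your key point --- that a compact ultrametric has no positive accumulation point of distances, so nothing lies strictly between consecutive spectrum values --- is exactly what makes packing hold with $\kappa=1$, gives $B^o_\rho(x,r_{k-1})=B_\rho(x,r_k)$, and yields the exact identity $\rho(\rep{a},\rep{b})=\Delta(a\wedge b)$ together with the ball bookkeeping for $\mathcal{O}_\T$; this is essentially the standard correspondence the citation points to, so you buy self-containedness at the cost of length. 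Second, for compactness of $(U,\rho)$ the paper observes that $\T$ is a surjective net-tree over $(U,\rho)$ and invokes Proposition~\ref{prop:net-tree-prop}, whereas you argue total boundedness plus completeness directly using the finite partition $\{\tilde\partial(v)\}_{v\in\tilde\vb(\rootv,\delta)}$; this is fine (do state explicitly that distinct $\delta$-descendants have boundaries more than $\delta$ apart, which is what forces a Cauchy sequence to be funneled through a nested chain), and your remark that $\rho$ is a priori only a pseudo-ultrametric if a branching vertex carries label $0$ is a legitimate caveat about the literal generality of the statement, correctly resolved by your quotient comment and irrelevant for the trees the paper actually builds.
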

\begin{proof}
We begin with the first statement. 
Observe that in a tree, for any three branches 
$a,b,c\in\tilde\partial(\rootv)$
$a\wedge b$ or $b\wedge c$ (or both) must be a weak ancestor of $a\wedge c$, and from the monotonicity of the labels,
\[
\rho(a,c)= \Delta(a\wedge c)\le 
\max\{\Delta(a\wedge b),\Delta(b\wedge c)\}
=\max\{\rho(a,b),\rho(b,c)\}.
\]
Hence $\rho$ is an ultrametric.
Observe that $\T$ is a surjective net-tree over $(U,\rho)$, so by
Proposition~\ref{prop:net-tree-prop}, $U$ is compact.
The $2$-Lipschitz property of $\rep{\cdot}$ follows from the covering property of $\T$ over $X$.

The second and third statements are proved in~\cite[\S2]{MN-skeleton}.
\end{proof}

The above representation of (compact) ultrametrics as  boundaries of net-trees 
is well known and is discussed in the literature 
using assortment of terminologies. 
Examples are \emph{hierarchical well-separated trees} (in, e.g.,~\cite[\S3.1]{BLMN05}),
\emph{boundary of the visual metric over trees}
(in, e.g.,~\cite[\S1]{arcozzi2019ahlfors}), 
and 
\emph{the end space of trees} (in~\cite[\S1]{Hug04} and references therein).

\section{Proof of the ultrametric skeleton theorem}
\label{sec:um-skeleton-regular}

The result of iterative applications of 
Corollary~\ref{cor:bar-ramsey-decomp-2}
on a net-tree of a given compact space,
is a hierarchy described in the following lemma.
For $A\subseteq X$, denote by $\overline{A}$ the topological closure of $A$.

\begin{lemma}
\label{lem:um-skeleton-regular}
Fix a compact $\lambda$-\emph{doubling} metric space $(X,d)$, 
a finite Borel  measure $\mu$ on $X$,
and $t\in\{2,3,4,\ldots \}$.
Then, there exists a net-tree $\T$ with the root $\rootv\in \T$
and Borel subsets $C_u\subseteq X$ 
associated with every $u\in \T$,  
having the following properties.
\begin{enumerate}[label=(\Alph*)]
\item \label{it:root} $C_\rootv=X$.
\item \label{it:laminar} $C_u\subseteq C_{\p(u)}$ for every $u\in\T\setminus\{\rootv\}$.
\item \label{it:Delta} $\Delta(u)= \diam_d(C_u)$.
\item \label{it:separation} If $u, v\in\T$, $u\wedge v \notin\{u,v\}$,  then $d(C_u,C_v)\ge \Delta({u\wedge v})/(16t)$.
\item \label{it:singleton} For every branch $b\in \tilde\partial(\rootv)$, $\bigcap_{v\in b} \overline{C_v}=\{\rep{b}\}$.  
\item \label{it:partialT=T}  
 $\partial(u)\subseteq \overline{C_u}$, for every $u\in \T$.
\item \label{it:B-subset-C_u} $B_d(\rep{u}, c \Delta(\p(u))/t) \subseteq C_u$, for some universal $c>0$, and for every $u\in \T\setminus\{\rootv\}$.
\setcounter{EnumResume}{\value{enumi}}
\end{enumerate}
In particular, defining $\rho(\rep{a§},\rep{b})=\Delta(a\wedge b)$ for  $a,b\in \tilde\partial(\rootv)$, 
\begin{enumerate}[resume*]
\item \label{it:rho}
$\rho$ is an ultrametric on $\Image(\T)$ satisfying 	$d(x,y) \le \rho(x,y)\le 16t \cdot d(x,y)$, 
for every $x,y\in \Image(\T)$.
\setcounter{EnumResume}{\value{enumi}}
\end{enumerate}
Lastly, there exists a  function $\xi:\T\to[0,\infty)$
satisfying:
\begin{enumerate}[resume*]

\item \label{it:xi subadditive} $\xi$ is sub-additive on $\T$. I.e.,
for every vertex $u\in \T$,
$\xi(u)\le \sum_{v\in\p^{-1}(u)} \xi(v)$.

\item  \label{it:xi-mu} 
For every $u\in \T$, 
\begin{equation}
\label{eq:mu-xi}
\mu^{1-1/t}(C_u)\le \xi(u) \le \lambda^{2/t} \mu^{1-1/t}(C_u).
\end{equation}
\end{enumerate}
\end{lemma}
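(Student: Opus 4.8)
The plan is to build the net-tree $\T$, the sets $C_u$, and the function $\xi$ by a single top-down recursion that repeatedly invokes Corollary~\ref{cor:bar-ramsey-decomp-2}. At the root put $C_\rootv=X$, so that \eqref{it:root} holds; inductively, given a vertex $u$ with its set $C_u$ already defined and $\mu(C_u)>0$, set $\Delta(u)=\diam_d(C_u)$ (this is item~\eqref{it:Delta}) and apply the corollary to $Z=C_u$ with parameter $\Delta=\Delta(u)$ and the fixed integer $t$. This produces closed $P,Q\subseteq C_u$ with $\mu(P)>0$, $d(P,Q)\ge \Delta(u)/(8t)$, $\diam_d(\bQ)\le \Delta(u)/2$, $\diam_d(P)\le(\tfrac12-\tfrac1{4t})\Delta(u)$, and the key inequality~\eqref{eq:bar-ramsey-decomp-2}. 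I then make $P$ (more precisely its closure, or $P$ together with the representative point of $u$, chosen inside $P$) the set $C_v$ for one child $v$ of $u$, and recurse separately into $\bQ$ — which is again closed with positive measure and smaller diameter — feeding its decomposition back in to generate the remaining children of $u$. Because at each step the diameter of the ``remainder'' $\bQ$ shrinks by a factor $1/2$, the recursion on the $\bQ$-side terminates, giving $u$ finitely many children, each with diameter at most $(\tfrac12-\tfrac1{4t})\Delta(u)\le\Delta(u)$; this gives the monotone-labels and finite-nonempty-children requirements of Definition~\ref{def:net-tree} and item~\eqref{it:laminar}. The representative point $\rep{u}$ is the distinguished point we keep inside every $C_v$ along a branch so that \eqref{it:B-subset-C_u} (via $P\supseteq B_d(\rep{u},c\Delta(\p(u))/t)$, using that we may slide the ball in Lemma~\ref{lem:bar-ramsey-decomp}) and covering hold; vanishing labels follow since diameters along any branch tend to $0$.

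Next I would verify the net-tree axioms and items~\eqref{it:separation}--\eqref{it:rho}. The packing/separation property~\eqref{it:separation} follows from $d(P,Q)\ge\Delta/(8t)$ together with the laminar nesting: if $u\wedge v\notin\{u,v\}$, trace back to the vertex $w=u\wedge v$ where $C_u$ and $C_v$ landed in different ``sides'' of some decomposition of a subset of $C_w$, and the $1/(8t)$ (which degrades to $1/(16t)$ after bounding $\Delta$ of the relevant subset by $\Delta(w)$) gives the bound; this also yields the $\kappa$-packing condition of Definition~\ref{def:net-tree} with $\kappa=O(t)$, hence $\T$ is a net-tree. Item~\eqref{it:singleton} is the Cantor-intersection argument already used in the propositions of Section~\ref{sec:net-trees}: $(\overline{C_v})_{v\in b}$ is a decreasing sequence of nonempty compacta with diameters $\to0$, so the intersection is a single point, which we call $\rep{b}$, consistent with the limit of $\rep{v}$. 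Item~\eqref{it:partialT=T} follows from $\vb(u)\subseteq C_u$ (each descendant's representative lies in its own $C$, nested inside $C_u$) and taking closures for the boundary. Finally \eqref{it:rho} is exactly Lemma~\ref{lem:um-rooted-tree} applied to this net-tree: $\rho(x,y)=\Delta(a\wedge b)=\diam_d(C_{a\wedge b})\ge d(x,y)$ since $x,y\in\overline{C_{a\wedge b}}$, while on the other side, if $a\wedge b$ is the meeting vertex then $x$ and $y$ lie in $C$-sets of two distinct children of $a\wedge b$ (or of nearby vertices), which are $\Delta(a\wedge b)/(16t)$-separated, giving $\rho(x,y)\le 16t\,d(x,y)$; one has to check the one exceptional case $a\wedge b\in\{\text{root}\}$ or the immediate-child case, which is where the constant $16t$ rather than $8t$ comes from.

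For the last two items I would set $\xi(u)=\mu^{1-1/t}(C_u)$ — no, more carefully: the corollary's inequality \eqref{eq:bar-ramsey-decomp-2} is phrased with the mixed normalizations $\mu^{\Delta/2}(\bQ)^{1/t}$ and $\mu^{\Delta}(Z)^{1/t}$, so the natural quantity to track is $\xi(u)=\mu(C_u)/\mu^{\Delta(\p(u))/2}(C_u)^{1/t}$ for non-root $u$ (with an appropriate convention at the root), because then one application of \eqref{eq:bar-ramsey-decomp-2} at $u$ reads precisely $\xi(v_P)+\xi(Q\text{-part})\ge \mu(C_u)/\mu^{\Delta(u)}(C_u)^{1/t}$, and the $Q$-part is itself handled by the recursion so that summing the telescoped inequalities over all children gives $\sum_{v\in\p^{-1}(u)}\xi(v)\ge \xi(u)$ once one matches $\mu^{\Delta(u)}(C_u)$ against the $\mu^{\Delta(\p(u))/2}$ appearing in $\xi(u)$ — here one uses the monotonicity $\mu^\delta(A)\le\mu^\Delta(C)$ for $\delta\le\Delta$, $A\subseteq C$ from the Proposition before Lemma~\ref{lem:bar-ramsey-decomp}, together with $\Delta(u)\le\Delta(\p(u))$ and $C_u\subseteq C_{\p(u)}$. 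That yields subadditivity \eqref{it:xi subadditive}. For the two-sided bound \eqref{it:xi-mu}: since $\diam_d(C_u)=\Delta(u)\le\Delta(\p(u))$, inequality \eqref{eq:mu-mu*} gives $\mu^{\Delta(u)}(C_u)\le\mu(C_u)\le\lambda^2\mu^{\Delta(u)}(C_u)$ and the same with $\Delta(\p(u))/2$ in place of $\Delta(u)$ (still $\ge\diam_d(C_u)$ up to the factor $2$, which one checks), so $\xi(u)=\mu(C_u)/\mu^{\Delta(\p(u))/2}(C_u)^{1/t}$ lies between $\mu(C_u)^{1-1/t}$ and $\lambda^{2/t}\mu(C_u)^{1-1/t}$; since $\mu(C_u)$ and $\mu^{1-1/t}(C_u)$ differ again by at most $\lambda^{2}$ powers one absorbs everything into the stated $\lambda^{2/t}$ (this is the kind of constant-chasing the problem statement says not to grind through).

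The step I expect to be the genuine obstacle is the bookkeeping of the recursion on the $\bQ$-side — making sure that expanding $\bQ$ into the remaining children of $u$ is done so that (i) the pieces are pairwise $\Omega(\Delta(u)/t)$-separated from $P$ and from each other (this needs that separations only degrade by bounded factors as we iterate the corollary inside shrinking sets, since the threshold $\Delta/(8t)$ is computed relative to the diameter of the \emph{current} remainder, not of $C_u$), and (ii) the telescoping of \eqref{eq:bar-ramsey-decomp-2} over the whole chain of remainders reassembles exactly into $\sum_v \xi(v)\ge\xi(u)$ with no loss. Getting the normalization in the definition of $\xi$ exactly right so that both the subadditivity and the sandwich \eqref{eq:mu-xi} come out cleanly — in particular deciding whether to normalize by $\mu^{\Delta(u)}$ or $\mu^{\Delta(\p(u))/2}$ and tracking the resulting $\lambda$-powers — is the delicate part; everything else is a reprise of the net-tree machinery of Section~\ref{sec:net-trees} and the already-proven corollary.
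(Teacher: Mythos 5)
Your high-level plan (iterate Corollary~\ref{cor:bar-ramsey-decomp-2} top--down to grow a hierarchy of clusters, read off the ultrametric from the separation property, and track the mixed normalizations of~\eqref{eq:bar-ramsey-decomp-2} through a quantity $\xi$) is indeed the skeleton of the argument, but there is a genuine gap at precisely the item that is new in this lemma, namely~\eqref{it:B-subset-C_u}. You apply the corollary directly to $Z=C_u\subseteq X$ and assert that $P\supseteq B_d(\rep{u},c\Delta(\p(u))/t)$ ``using that we may slide the ball''. This fails: in Lemma~\ref{lem:bar-ramsey-decomp} the set $P$ is a ball of radius $\approx\Delta/8$ \emph{intersected with $Z$}, so it only contains a relative ball of $Z$, never an ambient ball of $X$, and after one level of recursion $Z=C_u$ is already an arbitrarily ``thin'' subset of $X$. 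Since the whole purpose of~\eqref{it:B-subset-C_u} is to lower-bound $\nu$ by the $\mu$-measure of a genuine $X$-ball (inequality~\eqref{eq:measure-shrink-regular}), no choice of the maximizing point $x$ in Lemma~\ref{lem:bar-ramsey-decomp} can rescue this. The actual proof avoids the problem by never decomposing subsets of $X$ directly: it first builds a surjective $20$-net-tree $\Ntree$ over $X$ (Proposition~\ref{prop:net-tree-exists}) with its partial boundaries (Proposition~\ref{prop:partial-boundary}), and at each vertex $u$ applies the corollary to the \emph{finite} space $\widetilde Z_u$ of net-tree vertices at scale $s_u\approx\Delta(u)/(64t)$, equipped with the discrete measure $x\mapsto\mu(\pb_\Ntree(x))$. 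A cluster $C_u$ is then a union of partial-boundary cells at scale $s_{\p(u)}$, and because each cell contains the ambient ball $B^o_d(\rep{x},\Delta_\Ntree(\p_\Ntree(x))/20)$ by Proposition~\ref{prop:partial-boundary}\eqref{it:partial-packing}, one gets $B_d(\rep{u},\Delta(\p(u))/(5120t))\subseteq C_u$ for free. Without this discretization (or an equivalent device such as Christ cubes), item~\eqref{it:B-subset-C_u} is simply not attainable by your construction.

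Two further points would also need repair. First, your branching rule is inconsistent: you carve off $P$ and then ``recurse into $\bQ$'' to produce the remaining children, but $P\subseteq\bQ$, so the later pieces need not be disjoint from, let alone $\Delta(u)/(16t)$-separated from, the first child, which breaks item~\eqref{it:separation} and the ultrametric identification; the paper instead makes $u$ a \emph{binary} vertex with children corresponding to $P$ and to $Q=Z\setminus\bQ$ (which are $\Delta/(8t)$-separated by construction), discarding $\bQ\setminus P$, and the telescoping of~\eqref{eq:bar-ramsey-decomp-2} is built exactly for that binary split (the $Q$-term keeps the parent's normalizer, the $P$-term's normalizer $\mu^{\Delta/2}(\bQ)$ dominates the child's because $\diam(\bQ)\le\Delta/2$). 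Second, your normalization $\xi(u)=\mu(C_u)/\mu^{\Delta(\p(u))/2}(C_u)^{1/t}$ cannot satisfy the sandwich~\eqref{eq:mu-xi} at $Q$-children: inequality~\eqref{eq:mu-mu*} requires the normalizing scale to be at least $\diam_d(C_u)$, while $\diam_d(Q)$ can be nearly $\Delta(\p(u))$. The paper normalizes by $u$'s own scale, $\xi(u)=\mu(C_u)/\tilde\mu_u^{\Delta(u)+12s_u}(\widetilde Z_u)^{1/t}$, so~\eqref{eq:mu-xi} follows directly from~\eqref{eq:mu-mu*} and doubling, and recovers subadditivity through a separate comparison of normalizers across consecutive scales (Claim~\ref{cl:*-comp}); this is the piece of bookkeeping your proposal flags as delicate but leaves unresolved.
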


Lemma~\ref{lem:um-skeleton-regular} is very similar to a corresponding lemma in~\cite{mendel2021simple}, but with the crucial addition of Item~\ref{it:B-subset-C_u}, which adds a significant technical complication to the proof.

\begin{proof}[Proof of Lemma~\ref{lem:um-skeleton-regular}]
Fix a surjective $20$-net-tree $\Ntree$ over $X$ with the root $\rootv_\Ntree$, whose existence was established in 
Proposition~\ref{prop:net-tree-exists}.
The rooted tree $\T$, the clusters associated with it, and their representative points  are defined inductively from top to bottom.
For e vertex $u\in\T$, instead of defining $\rep{u}$ and $C_u$ directly on $X$, we will define ``tilde versions",
 $\widetilde{C}_u\subset \Ntree$ and 
 $\trep{u}\in\widetilde C_u$ using vertices of the net-tree $\Ntree$. We will maintain by induction that $\widetilde C_u$ is finite.
Using the tilde versions, we define $\rep{u}=\repT{\Ntree}{\trep{u}}$,  and $C_u=\bigcup_{x\in \widetilde{C}_u}{\pb_\Ntree(x)}$.
Define $\Delta(u)=\diam_d(C_u)$
(which satisfies Item~\ref{it:Delta}). 

The cluster associated with the root of $\T$,  $\rootv\in\T$, is $\widetilde{C}_{\rootv}=\{\rootv_\Ntree\}$, 
and its representative is $\trep{\rootv}=\rootv_\Ntree$, which satisfies Item~\ref{it:root}.

Assume next that the vertex $u\in\T$, the associated cluster $\widetilde{C}_u$, and its representative $\trep{u}$ were defined.
If $\Delta(u)=0$ (i.e., $C_u$ is a singleton), 
we simply define a new vertex $v\in\T$ whose parent
is $u=\p(v)$,  $\widetilde C_v=\{\tilde v\}$,
$\trep{v}=\tilde v$, and $\Delta(v)=0$.

Next we assume that $\Delta(u)>0$, (i.e., $|C_u|>1$). 
Let $s_u=4^{\lfloor\log_4\Delta(u)\rfloor}/(64t)$ and
\[\widetilde Z_u=\bigcup_{y\in\widetilde C_u}
\tvb_\Ntree(y,s_u).
\]
Since (by the inductive hypothesis) $\widetilde C_u$ is finite and so are $\tvb_\Ntree(y,s_u)$  for $y\in \widetilde C_u$, 
their union, 
$\widetilde Z_u$, is also finite.

Informally speaking, both $\widetilde Z_u$, and $\widetilde C_u$ represent  the cluster $C_u\subset X$. 
They both represents $C_u$ as a disjount union of partial boundaries of $\Ntree$'s vertices, but at different scales.
The scale of the vertices in $\widetilde C_u$ is 
$s_{\p(u)}$, while the scale of the vertices in  
$\widetilde Z_u$ is 
$s_u$.

Observe that necessarily $|\widetilde Z_u|>1$ because, by the covering property, for every vertex $x\in \widetilde Z_u$, 
$\diam_d(\pb_\Ntree(x))\le \Delta(v)/(64t)<\diam_d(C_v)$.
Let $\tilde d_u$ be a metric on $\widetilde Z_u$ defined as 
$\tilde d_u(x,y)=d(\repT{\Ntree}{x},\repT{\Ntree}{y})$.
Since $(\widetilde Z_u,\tilde d)$ is a metric induced by a subset of $X$ it is also $\lambda$-doubling.
We also define a (discrete) measure $\tilde\mu_u$ on $\widetilde Z_u$ as 
$\tilde\mu_u(\{x\})=\mu(\pb_\Ntree(x))$.
Since the partial boundaries of unrelated vertices in $\Ntree$ are disjoint, we have 
for any non-root vertex $u\in\T$, 
$\mu(C_u)=\tilde \mu_{\p(u)}(\widetilde C_u)
=\tilde\mu_u(\widetilde Z_u)$.

Apply Corollary~\ref{cor:bar-ramsey-decomp-2} on $(\widetilde Z_u,\tilde d_u,\tilde \mu_u)$ with $\Delta=\Delta(u)+12 s_u$, and let $\widetilde P,\widetilde Q\subset \widetilde Z_u$ the resulting subsets.
Define new vertices $v$ and $w$ as the children of $u$ in $\T$.
Define $\widetilde C_{v}=\widetilde P$ and $\widetilde C_{w}=\widetilde Q$. 
This, in particular,
satisfies Item~\ref{it:laminar}.
Define $\trep{v}\in\widetilde{C}_v$, $\trep{w}\in \widetilde C_w$  arbitrary net-tree vertices in $\widetilde{C}_v$ and $\widetilde{C}_w$, respectively.
We have thus finished describing the inductive construction of $\T$. 
We are left to check that $\T$ is indeed a net-tree over $X$
and prove the rest of the properties of $\T$.

Item~\ref{it:separation}:
 Fix a pair of vertices $v,w\in \T$ for which
$v\wedge w \notin\{v,w\}$. 
Denote $u=v\wedge w$, and let 
 $v'\in \p^{-1}(u)\cap \p^{(*)}(v)$, $w'\in \p^{-1}(u)\cap \p^{(*)}(w)$ be the children of $u$ which are ancestors of $v$, $w$ (respectively).
 By Corollary~\ref{cor:bar-ramsey-decomp-2},
 \[\tilde d(\widetilde{C}_{v'},\widetilde{C}_{w'})\ge \diam_{\tilde d}(\widetilde{C}_u)/8t.\] 
 By the covering property of the net-tree $\Ntree$,
 \[\Delta(u)=\diam_d(C_u)\le \diam_{\tilde d}(\widetilde{C}_u)+ 2s_u
\le \diam_{\tilde d}(\widetilde{C}_u) +\Delta(u)/(32t).
 \]
 Similarly, by the covering property and the triangle inequality
 \[
 d(C_{v'}, C_{w'}) \ge \tilde d(\widetilde{C}_{v'}, \widetilde{C}_{w'}) 
 - 2s_u
 \ge \tilde d(\widetilde{C}_{v'}, \widetilde{C}_{w'}) -  \Delta(u)/(32t).
 \]
 Concatenating the last three inequalities above,
  \[
 d(C_v,C_w)\ge d(C_{v'}, C_{w'}) 
\ge  \Delta(u)/(16t),\]
 which proves Item~\ref{it:separation}.

Item~\ref{it:singleton}:
Fix a branch $b=(\rootv=v_0,v_1,v_2,\ldots)\in\tilde\partial_\T(\rootv)$.
$\overline{C_{v_i}}$ is compact and 
$C_{v_{i+1}}\subseteq C_{v_i}$.
Obviously $\Delta(v_i)=\diam(C_{v_i}) =\diam(\overline{C_{v_i}})$.
We claim that $\Delta(v_i)\searrow 0$. 
Indeed, $\Delta(v_i)$ is non-increasing. 
Suppose towards a contradiction that there exists $\e>0$ such that $\Delta(v_i)\ge \e$, for every $i\in\mathbb N$. 
If there exists $v_i$ which has only one child then by the construction above, $C_{v_i}$ is a singleton,
all its descendants have only one child, 
and by the definition of $\Delta(\cdot)$, $\Delta(v_j)=0$ for $j\ge i$. 
So assume now that every vertex $v_i$ has two children.
Let $u_i$ be the ``other child" of $v_i$, i.e., the child of $v_i$ such that $u_i\ne v_{i+1}$.
Observe that for $i<j$, $u_i \wedge u_j=v_i$.
Therefore,
for every $i\ne j$,
\[
d(C_{u_i},C_{u_j})\ge \Delta({v_{\min\{i,j\}}})/(16t)\ge \e/(16t),
\]
which contradicts the compactness of $X$.
Hence, $\lim_{i\to\infty} \Delta({v_i})=0$. By Cantor's intersection theorem,
 $\bigcap_{v\in b} \overline{C_v}$ is a singleton.
 Since, by construction $\rep{v_i}\in C_{v_i}$, we have
 $\rep{b}=\lim_{v\to b}\rep{v_i} \in \bigcap_{v\in b} \overline{C_v}$ is the unique element.

Item~\ref{it:partialT=T}:
The assertion $\partial(u)\subseteq \overline{C_u}$, is an immediate corollary of Item~\ref{it:singleton}.

Item~\ref{it:B-subset-C_u}:
Assume $u\ne\rootv_\T$, 
By construction, $C_u\supseteq \pb_\Ntree(\trep{u})$, 
where (again by the construction) $\trep{u}\in\widetilde C_u\subset \widetilde Z_{\p(u)}\subset 
\tvb_\Ntree(\rootv_\Ntree,s_{\p(u)}))$.
This means that 
$\Delta_\Ntree(\p_\Ntree(\trep{u}))\ge \Delta(\p(u))/(256t)$.
Therefore, by the packing property of the net-tree $\Ntree$
(Proposition~\ref{prop:partial-boundary}, Item~\ref{it:partial-packing}), 
we have
\[
C_u\supseteq \pb_\Ntree(\trep{u}) 
\supseteq B^o_d\bigl(\rep{u}, \Delta_\Ntree(\p_\Ntree(\trep{u}))/20\bigr)
\supseteq B_d(\rep{u}, \Delta(\p(u))/(5121t)).
\]

Item~\ref{it:rho}:
Fix $a,b\in \tilde\partial_\T(\rootv)$. 
Since $\rep{a},\rep{b}\in \overline{C_{a\wedge b}}$, we have
$d(\rep{a},\rep{b})\le \diam_d(C_{a\wedge a})=\rho(\rep{a},\rep{b})$.
On the other hand, denote $v\in a\cap\p^{-1}(a\wedge b)$,
 $w\in b\cap\p^{-1}(a\wedge b)$. 
Since $\rep{a}\in \overline{C_{v}}$ while 
$\rep{b}\in  \overline{C_{w}}$
\[ 
d(\rep{a},\rep{b}) \ge d(\overline{C_{v}}, \overline{C_{w}})=  
d({C_{v}}, {C_{w}})
\ge \Delta(a\wedge b)/(16t)=
\rho(\rep{a},\rep{b})/(16t).
\]
This in particular implies that $\T$ is a $(16t)$-net-tree over $X$.

We are left to define 
 $\xi:\T\to[0,\infty)$, and prove that it is sub-additive and approximates $\mu^{1-1/t}(C_u)$.
 Define $\mu^*_u=\tilde \mu_u^{\Delta(u)+12s_u}$, and
\[
\xi(u)=\frac{\mu(C_u)}{\mu_u^*(\widetilde Z_u)^{1/t}}.
\]
If $u$ has only one child then, by construction, $C_u$ is a singleton and $\xi(u)=\mu^{1-1/t}(C_u)$. In this case the sub-additivity is trivial.
So we assume now that $u$ has two children. 
Let $v$ be the  child of $u$ associated with $P$ in Corollary~\ref{cor:bar-ramsey-decomp-2}, and $w$ the child  associated with $Q$.  
By Inequality~\eqref{eq:bar-ramsey-decomp-2} of Corollary~\ref{cor:bar-ramsey-decomp-2}, 
we have
\begin{equation}\label{eq:cor-*}
\xi(u)=\frac{\tilde \mu_u(\widetilde Z_u)}{\tilde\mu_u^{\Delta(u)+12s_u}(\widetilde Z_u)^{1/t}}
\le 
\frac{ \mu ( C_{v})}{\tilde \mu _u^{\Delta(u)/2+6s_u} (\widetilde C_{v})^{1/t}}
+
\frac{\mu(C_{w})}{\mu_u^{\Delta(u)+12s_u}(\widetilde C_w)^{1/t}}
.
\end{equation}

\begin{claim}\label{cl:*-comp}
\begin{align}\label{eq:*-comp}
 \mu_{v}^*(\widetilde Z_{v}) & \le
\tilde \mu _u^{\Delta(u)/2+6s_u} (\widetilde C_{v}),
&
 \mu_{w}^*(\widetilde Z_{w}) & \le
\tilde \mu_u^{\Delta(u)+12s_u}(\widetilde C_w)
\end{align}
\end{claim}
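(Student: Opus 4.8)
\noindent\textbf{Proof plan for Claim~\ref{cl:*-comp}.}
Both inequalities are instances of one argument; I will describe it for the left child $v$ and then indicate the change for $w$. I may assume $\Delta(v),\Delta(w)>0$, since if e.g.\ $C_v$ is a single point then $\tilde\mu_u$ restricted to $\widetilde C_v$ is a point mass, $\mu^\delta$ is insensitive to $\delta$ there, and both sides of the inequality equal $\mu(C_v)$. The key object is the \emph{refinement projection}: since $\widetilde C_v=\widetilde P\subseteq\widetilde Z_u$ is an antichain in $\Ntree$ (being a subset of $\widetilde Z_u$, a union of pairwise-unrelated $\delta$-descendant sets) and $\widetilde Z_v=\bigcup_{y\in\widetilde C_v}\tilde\vb_\Ntree(y,s_v)$, every $x\in\widetilde Z_v$ descends in $\Ntree$ from a unique $\pi(x)\in\widetilde C_v$ with $\pi^{-1}(y)=\tilde\vb_\Ntree(y,s_v)$. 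From the disjoint-refinement identity $\pb_\Ntree(y)=\biguplus_{x\in\pi^{-1}(y)}\pb_\Ntree(x)$ (the paragraph following Proposition~\ref{prop:partial-boundary}) and additivity of $\mu$ I get $\tilde\mu_u(\{y\})=\tilde\mu_v(\pi^{-1}(y))$, hence $\tilde\mu_v(A)\le\tilde\mu_u(\pi(A))$ for every $A\subseteq\widetilde Z_v$.

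Next I would control how $\pi$ distorts distances. For $x\in\widetilde Z_v$ the covering property of $\Ntree$ gives $\repT{\Ntree}{x}\in B_d\big(\repT{\Ntree}{\pi(x)},\Delta_\Ntree(\pi(x))\big)$, and $\pi(x)\in\widetilde Z_u$ forces $\Delta_\Ntree(\pi(x))\le s_u$; hence $\tilde d_v(x,x')<r$ implies $\tilde d_u(\pi(x),\pi(x'))<r+2s_u$. Combining this with the push-forward inequality, for any $a\in\widetilde Z_v$,
\[
\tilde\mu_v\Big(B_{\tilde d_v}\big(a,\tfrac{\Delta(v)+12s_v}{4}\big)\cap\widetilde Z_v\Big)\le\tilde\mu_u\Big(B_{\tilde d_u}\big(\pi(a),\tfrac{\Delta(v)+12s_v}{4}+2s_u\big)\cap\widetilde C_v\Big),
\]
so that $\mu_v^*(\widetilde Z_v)\le\sup_{y\in\widetilde C_v}\tilde\mu_u\big(B_{\tilde d_u}(y,\tfrac{\Delta(v)+12s_v}{4}+2s_u)\cap\widetilde C_v\big)$. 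Thus, once I verify the radius inequality $\tfrac{\Delta(v)+12s_v}{4}+2s_u\le\tfrac14\big(\tfrac12\Delta(u)+6s_u\big)$, i.e.\ $\Delta(v)+12s_v\le\tfrac12\Delta(u)-2s_u$, monotonicity of $\mu^\delta$ in $\delta$ gives $\mu_v^*(\widetilde Z_v)\le\tilde\mu_u^{\Delta(u)/2+6s_u}(\widetilde C_v)$, the first inequality of the claim.

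For the radius inequality I use the arithmetic of the scales. Since $s_u=4^{\lfloor\log_4\Delta(u)\rfloor}/(64t)$ we have $s_u\le\Delta(u)/(64t)$, and $s_v,s_w$ are $s_u$ divided by a power of $4$. Each $\pb_\Ntree(x)$ with $x\in\widetilde C_v$ has $d$-diameter at most $2\Delta_\Ntree(x)\le2s_u$, so by the diameter bound of Corollary~\ref{cor:bar-ramsey-decomp-2} (used with $\Delta=\Delta(u)+12s_u$), $\Delta(v)=\diam_d(C_v)\le(\tfrac12-\tfrac1{4t})(\Delta(u)+12s_u)+2s_u\le\tfrac12\Delta(u)-\tfrac1{8t}\Delta(u)$; then $12s_v\le\tfrac{3}{32t}\Delta(u)$ and $\Delta(v)+12s_v\le\tfrac12\Delta(u)-\tfrac1{32t}\Delta(u)\le\tfrac12\Delta(u)-2s_u$, as required. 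The same scheme for $w$ reduces the second inequality of the claim to $\Delta(w)+12s_w\le\Delta(u)+4s_u$. Here $\Delta(w)\le\Delta(u)$ because $C_w\subseteq C_u$, and there are two cases: if $s_w\le s_u/4$ then $12s_w\le3s_u$ and we are done; if $s_w=s_u$ then every $y\in\widetilde C_w\subseteq\widetilde Z_u$ already has $\Delta_\Ntree(y)\le s_u=s_w$, so $\tilde\vb_\Ntree(y,s_w)=\{y\}$, whence $\widetilde Z_w=\widetilde C_w$ with $\tilde\mu_w=\tilde\mu_u$ on it, and $\mu_w^*(\widetilde Z_w)=\tilde\mu_u^{\Delta(w)+12s_u}(\widetilde C_w)\le\tilde\mu_u^{\Delta(u)+12s_u}(\widetilde C_w)$ directly from $\Delta(w)\le\Delta(u)$ and monotonicity in $\delta$.

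I expect the genuinely non-routine part to be the constant bookkeeping rather than the structural argument: the $\tfrac1{4t}$ gain in the Corollary's diameter estimate must absorb both the $12s_u$-enlargements built into the $\mu^*$'s and the $2s_u$ metric distortion of $\pi$, which works only because $s_u\asymp 4^{\lfloor\log_4\Delta(u)\rfloor}/t$ is comparable to $\Delta(u)/t$. The subtle point is the right child when $s_w=s_u$: there the projection is the identity and the inequality must instead come from monotonicity in the radius, so treating $v$ and $w$ uniformly (and refining blindly) is precisely what one should avoid.
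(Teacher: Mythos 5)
Your strategy is the same as the paper's: map each $x\in\widetilde Z_v$ to its unique ancestor in $\widetilde C_v$, compare masses via the disjoint decomposition of partial boundaries, pay $2s_u$ in metric distortion for this projection, and close with the diameter bound of Corollary~\ref{cor:bar-ramsey-decomp-2} together with the scale arithmetic $\Delta(u)/(256t)\le s_u\le \Delta(u)/(64t)$. Your treatment of the right child, including the separate case $s_w=s_u$ where the projection is the identity and only monotonicity in $\delta$ is used, is correct and matches the paper.

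The left inequality as written, however, has a gap: your unified radius inequality $\Delta(v)+12s_v\le\tfrac12\Delta(u)-2s_u$ is not justified and can fail. The step $12s_v\le\tfrac{3}{32t}\Delta(u)$ does not follow from $s_v\le s_u\le\Delta(u)/(64t)$ (that only gives $12s_v\le\tfrac{3}{16t}\Delta(u)$); it requires $s_v\le s_u/4$. The case $s_v=s_u$ is genuinely possible even though $\Delta(v)\le\Delta(u)/2-8s_u$ (e.g.\ $\Delta(u)$ close to $4^{k+1}$ and $\Delta(v)$ just above $4^{k}$ give $\lfloor\log_4\Delta(v)\rfloor=\lfloor\log_4\Delta(u)\rfloor$), and in that case all the construction guarantees is $\Delta(v)+12s_v\le\Delta(u)/2+4s_u$, which is weaker than the $\Delta(u)/2-2s_u$ you need, so your chain breaks exactly there. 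The repair is the same dichotomy you already perform for $w$ (and which the paper performs for both children): when $s_v=s_u$ one has $\widetilde Z_v=\widetilde C_v$, the projection is the identity, no $+2s_u$ distortion is incurred, and it suffices that $\Delta(v)+12s_u\le\Delta(u)/2+6s_u$, i.e.\ $\Delta(v)\le\Delta(u)/2-6s_u$, which does follow from the diameter estimate; when $s_v\le s_u/4$ your computation is fine. So your closing remark that the only subtle scale-equality case is the right child is off: the same degeneracy must be split off for the left child, and that is precisely where your write-up slips.
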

\begin{proof}[Proof of Claim~\ref{cl:*-comp}]
We begin with the inequality on the left.
By Corollary~\ref{cor:bar-ramsey-decomp-2},
\begin{multline}
\label{eq:Delta-v}
\Delta(v)=\diam_d(C_{v})\le \diam_{\tilde d_u} (\widetilde C_{v}) +2 s_u
\\ \le (\tfrac12-\tfrac1{4t})(\Delta(u)+12s_u) +2s_u
\le  \Delta(u)/2 + 6s_u-\Delta(u)/(4t) +2s_u \\
\le \Delta(u)/2  -8 s_u.
\end{multline}

By construction $s_v\le s_u$, and their ratio is a multiple of $4$. 
So either $s_v=s_u$ or $s_v\le s_u/4$.
Assume first that $s_u=s_v$. In this case, by definition,
$\widetilde Z_v=\widetilde C_v$,
and so
\begin{multline*}
\mu_v^*(\widetilde Z_v)
=\tilde \mu_v^{\Delta(v)+12s_v} (\widetilde Z_v)
=\tilde \mu_u^{\Delta(v)+12s_u} (\widetilde C_v)
\\ \stackrel{\eqref{eq:Delta-v}}\le 
\tilde \mu_u^{\Delta(u)/2- 8s_u+12s_u} (\widetilde C_v)
\le \tilde \mu_u^{\Delta(u)/2+6s_u} (\widetilde C_v).
\end{multline*}

Next, assume that $s_v\le s_u/4$.
Fix $x\in \widetilde Z_{v}$.
If  
$y\in B_{\tilde d_{v}}({x}, (\Delta(v)+12s_{v})/4)$, then
\[\tilde d_v(x,y) 
\le \frac{\Delta(v)+12s_v}{4}
\stackrel{\eqref{eq:Delta-v}}\le \frac{\Delta(u)/2-8s_u+12s_v}{4}
\le \frac{\Delta(u)/2-3s_u}{4}
.
\]
By the construction of $\widetilde Z_{v}$, there exists a unique
$x'\in \widetilde C_{v}\cap \p^{(*)}_\Ntree(x)$.
By the covering property of $x'$,
\(
d(\repT{\Ntree}{x},\repT{\Ntree}{x'})\le \Delta_\Ntree(x')\le s_u
\). Similarly, there exists a unique 
$y'\in \widetilde C_{v}\cap \p^{(*)}_\Ntree(y)$ 
and it satisfies 
\(
d(\repT{\Ntree}{y},\repT{\Ntree}{y'})\le s_u
\).
So 
$y'\in B_{\tilde d_{u}}({x'}, (\Delta(u)/2-3s_{u})/4+ 2s_u)$.
This implies 
\[ \tilde \mu_v \bigl(B_{\tilde d_v}(x,(\Delta(v)+12s_v)/4)\cap \widetilde Z_v \bigr) \le
\tilde \mu_u \bigl (B_{\tilde d_u}(x',(\Delta(u)/2+5s_u)/4)\cap \widetilde C_v \bigr),
\]
which implies
\(  \mu_{v}^*(\widetilde Z_{v})  \le
\tilde \mu _u^{\Delta(u)/2+6s_u} (\widetilde C_v)
.\)

\bigskip

We move to prove the right inequality of~\eqref{eq:*-comp}.
Similar to the argument above,
either $s_w=s_u$ or $s_w\le s_u/4$.
Assume first that $s_u=s_w$. In this case, by definition,
$\widetilde Z_w=\widetilde C_w$,
and so
\[
\mu_w^*(\widetilde Z_w)
=\tilde \mu_w^{\Delta(w)+12s_w} (\widetilde Z_w)
=\tilde \mu_u^{\Delta(w)+12s_u} (\widetilde C_w)
\le \tilde \mu_u^{\Delta(u)+12s_u} (\widetilde C_w).
\]
Next, assume that $s_w\le s_u/4$.
Fix $x\in \widetilde Z_{w}$
and 
$y\in B_{\tilde d_{w}}({x}, (\Delta(w)+12s_{w})/4)$. Then
\[\tilde d_w(x,y)
\le \frac{\Delta(w)+12s_w}{4}
\le \frac{\Delta(u)+3s_u}{4}.
\]
By an argument similar to the above,  there exist 
$x'\in \widetilde C_{w}\cap \p^{(*)}_\Ntree(x)$,
$y'\in \widetilde C_{w}\cap \p^{(*)}_\Ntree(y)$
for which
\(
d(\repT{\Ntree}{x},\repT{\Ntree}{x'})\le s_u
\), and 
\(
d(\repT{\Ntree}{y},\repT{\Ntree}{y'})\le s_u
\).
So 
$y'\in B_{\tilde d_{u}}({x}, (\Delta(u)+3s_{u})/4+ 2s_u)$.
This implies 
\[ \tilde \mu_w (B_{\tilde d_w}(x,(\Delta(w)+12s_v)/4)\cap \widetilde Z_w) \le
\tilde \mu_u (B_{\tilde d_u}(x',(\Delta(u)+11s_u)/4)\cap \widetilde C_w),
\]
which implies
\(  \mu_{w}^*(\widetilde Z_{w})  \le
\tilde \mu _u^{\Delta(u)+12s_u} (\widetilde C_w)
.\)
This finishes the proof of Claim~\ref{cl:*-comp}.
\renewcommand{\qed}{}
\end{proof}

Applying~\eqref{eq:*-comp} on~\eqref{eq:cor-*}, we obtain the sub-additivity of $\xi$:
\[
\xi(u) \le 
\frac{ \mu ( C_{v})}{ \mu_{v}^*(\widetilde Z_{v})^{1/t}}
+
\frac{ \mu ( C_{w})}{ \mu_{w}^*(\widetilde Z_{w})^{1/t}}=\xi(v)+\xi(w).
\]
This proves Item~\ref{it:xi subadditive}.

Item~\ref{it:xi-mu}:  
Lastly, we prove Inequality~\eqref{eq:mu-xi}.
Recall that $(\widetilde Z_u,\tilde d_u)$ is also $\lambda$-doubling, so 
using Inequality~\eqref{eq:mu-mu*},
\[
\lambda^{-2}\mu(C_u)=\lambda^{-2}\tilde\mu_u(\widetilde Z_u)
\le \mu^*_u(\widetilde Z_u)
\le \tilde\mu_u(\widetilde Z_u)=\mu(C_u).
\]
Thus, Inequality~\eqref{eq:mu-xi} follows.
\end{proof}

One of the outputs of Lemma~\ref{lem:um-skeleton-regular} is $\xi$, which is essentially a \emph{sub-additive} premeasure on the skeleton $\T$ controlled by $\mu^{1-1/t}$ from above  
{and from below}. 
The next lemma trims $\T$ and ``lowers" $\xi$ to make $\xi$
additive.

\begin{lemma} \label{lem:balanced-subtree}
Fix $\delta\in (0,1]$,
a rooted tree  $\T$ with the root $\rootv\in\T$,
and $\xi:\T\to[0,\infty)$.
Assume that $\xi$ is sub-additive,
i.e., $\xi(u)\le \sum_{\p^{-1}(u)} \xi(v)$,
for every $u\in\T$.
Further, assume that 
for any vertex $v\in\T$ and an ancestor $u\in \p^{(*)}(v)$,
 $\xi(u) \ge \delta \xi(v)$.
Then there exists a subtree $\Stree\subseteq \T$ with the same root $\rootv$, and  
\emph{additive} mapping $\sigma:\Stree\to[0,\infty)$   for which $\sigma(\rootv)=\xi(\rootv)$, 
and $\sigma$ is a $(\delta/2)$-approximation of $\xi$ on $\Stree$. 
I.e., for every $u\in\Stree$,
\begin{equation}\label{eq:sigma-additive}
 \xi(u)\ge \sigma(u) >\frac{\delta}2 \xi(u),\qquad \sigma(u)= \sum_{\p^{-1}(u)\cap \Stree} \sigma(v).
\end{equation}
\end{lemma}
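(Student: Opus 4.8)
The plan is to build $\Stree$ and $\sigma$ by a greedy top-down recursion on $\T$, at each vertex deciding whether to ``keep splitting'' or ``stop and lower $\xi$.'' Concretely, I would process vertices of $\T$ in order of increasing depth. The root is always in $\Stree$, and I set $\sigma(\rootv)=\xi(\rootv)$. When a vertex $u\in\Stree$ has been assigned a value $\sigma(u)$, I look at its children $\p^{-1}(u)=\{v_1,\ldots,v_m\}$ in $\T$ and at their $\xi$-values. If $\sigma(u)\le \sum_i \xi(v_i)$ — which by sub-additivity of $\xi$ is guaranteed whenever $\sigma(u)\le\xi(u)$ — then I can ``distribute'' the mass $\sigma(u)$ among the children: I keep all $v_i$ with $\xi(v_i)>0$ in $\Stree$ (discarding $\xi(v_i)=0$ children, and if all children have $\xi=0$ then $u$ becomes a leaf of $\Stree$ with $\sigma(u)$ left as is — but this case forces $\xi(u)=0$ too by the hypothesis $\xi(u)\ge\delta\xi(v_i)$ read the other way... actually I should be careful here, see below), and I assign to each surviving $v_i$ a value $\sigma(v_i)\in[0,\xi(v_i)]$ so that $\sum_i \sigma(v_i)=\sigma(u)$. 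This is an easy ``water-filling'' allocation: since $\sum\xi(v_i)\ge\sigma(u)$, greedily set $\sigma(v_i)=\min\bigl(\xi(v_i),\,\sigma(u)-\sum_{j<i}\sigma(v_j)\bigr)$.

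The key point is to verify the lower bound $\sigma(u)\ge(\delta/2)\xi(u)$ is maintained. This is where the hypothesis $\xi(u)\ge\delta\xi(v)$ for every ancestor $u$ of $v$ gets used. I would argue inductively that whenever $\sigma(v_i)$ is assigned a value strictly below $\xi(v_i)$ (i.e. $v_i$ is the ``last'' child that received mass, and it got the leftover), in fact all later children got $\sigma=0$, and we can simply drop them from $\Stree$; but the child that got the leftover might have $\sigma(v_i)$ very small. To control that, I would instead \emph{not} split further along a branch once the running value $\sigma$ has dropped to $\frac{\delta}{2}\xi(u)$ or below relative to the current $\xi(u)$: i.e., at vertex $u$ with value $\sigma(u)$, if $\sigma(u)\ge\frac{\delta}{2}\xi(u)$ we proceed; the allocation to children can make $\sigma(v_i)$ small, but then since $\xi(v_i)\le\xi(u)/\delta$ is false in the wrong direction — rather $\xi(u)\ge\delta\xi(v_i)$ so $\xi(v_i)\le\xi(u)/\delta$ — hmm. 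The cleaner route: among the surviving children, at most one (the leftover one) can have $\sigma(v_i)<\xi(v_i)$; for that one, if $\sigma(v_i)<\frac{\delta}{2}\xi(v_i)$, I simply \emph{prune} the entire subtree below $v_i$ from $\Stree$ but keep $v_i$ itself as a leaf with value $\sigma(v_i)$; since $v_i$ is then a leaf the additivity constraint $\sigma(v_i)=\sum_{\p^{-1}(v_i)\cap\Stree}\sigma$ holds vacuously, and the lower bound $\sigma(v_i)\ge\frac{\delta}{2}\xi(v_i)$ fails — so that does not work directly either. The actual fix is: if the leftover child $v_i$ would receive mass below $\frac{\delta}{2}\xi(v_i)$, do not keep $v_i$ in $\Stree$ at all; instead absorb its small leftover mass into an earlier surviving sibling $v_j$ (which has $\sigma(v_j)=\xi(v_j)$, so we'd exceed $\xi(v_j)$) — no. I think the correct and standard argument is: the \emph{total} mass $\sigma(u)$ with $\sigma(u)\ge\frac{\delta}{2}\xi(u)$ and $\xi(u)\ge\delta\xi(v_i)$, so $\sigma(u)\ge\frac{\delta}{2}\xi(u)\ge\frac{\delta^2}{2}\xi(v_i)$, which is too weak. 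So the right statement to carry in the induction must be the one in the lemma, $\sigma(u)\ge\frac{\delta}{2}\xi(u)$, and the mechanism to preserve it is to \emph{redistribute} after the fact: I would drop the leftover child and re-run the greedy allocation having decremented the target — but a cleaner equivalent is to choose the allocation so that each kept child gets either $\sigma(v_i)=\xi(v_i)$ (``full'' children) or, for the single ``partial'' child $v_i$, we keep it only if $\sigma(v_i)\ge\frac{\delta}{2}\xi(v_i)$, and otherwise we discard $v_i$ and instead stop splitting at $u$ entirely, replacing $\sigma(u)$ by $\sum_{j\ne i}\xi(v_j)=\sigma(u)-\sigma(v_i)$ and making those $v_j$'s leaves... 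I will present the recursion in this last form and verify, using $\xi(u)\ge\delta\xi(v)$, that lowering $\sigma(u)$ this way still keeps it $\ge\frac{\delta}{2}\xi(u)$: indeed the discarded mass $\sigma(v_i)<\frac{\delta}{2}\xi(v_i)\le\frac{1}{2}\xi(u)$ — wait, is $\xi(v_i)\le\xi(u)$? Only if $\xi$ were monotone, which it is not assumed. But $\xi(u)\ge\delta\xi(v_i)$ gives $\xi(v_i)\le\xi(u)/\delta$, so $\sigma(v_i)<\frac{\delta}{2}\cdot\frac{\xi(u)}{\delta}=\frac{\xi(u)}{2}$. Hence the new $\sigma(u)=\sigma(u)_{\mathrm{old}}-\sigma(v_i)>\sigma(u)_{\mathrm{old}}-\frac{\xi(u)}{2}\ge\frac{\delta}{2}\xi(u)-\frac{\xi(u)}{2}$ — negative, useless.

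I'll therefore take the genuinely correct approach, which is a \emph{two-pass} construction. First pass (pruning): remove from $\T$ every vertex $v$ such that $\xi(v)<\frac{\delta}{2}\xi(u)$ for \emph{some} ancestor $u$ — equivalently, keep $\Stree'$ = the connected component of the root consisting of vertices $v$ with $\xi(v)\ge\frac{\delta}{2}\xi(w)$ for all ancestors $w$ of $v$ in $\Stree'$. On $\Stree'$, $\xi$ is still sub-additive when restricted? Not quite, because some children are removed; but that only decreases the right-hand side, so I need: after removing light children, the remaining children $v_{i_1},\ldots,v_{i_k}$ of $u$ still satisfy $\sum\xi(v_{i_j})\ge\xi(u)-(\text{removed mass})$, and each removed child has $\xi<\frac{\delta}{2}\xi(u)$ — but there could be many removed children. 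The number of children, however, is finite but unbounded, so I cannot bound the removed mass by a constant fraction of $\xi(u)$ this way. Hence I must remove light \emph{subtrees} only, not light children, and accept that sub-additivity must be re-examined. The honest resolution — and what I expect the paper does — is: define $\Stree=\{v\in\T:\ \xi(w)\ge\frac{\delta}{2}\xi(\rootv_{\ldots})\ \ldots\}$; actually the clean invariant is $\xi(v)\ge\frac{\delta}{2}\xi(u)$ whenever $u=\p(v)$ is in $\Stree$, \emph{together with} $\xi$'s global property $\xi(u)\ge\delta\xi(v)$ for all ancestors, which bounds how fast $\xi$ can shrink and lets one show the kept children carry almost all of $\sigma(u)$. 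I will structure the proof as: (1) define $\sigma$ top-down by water-filling among children; (2) keep a child in $\Stree$ iff its allocated $\sigma$-value is $\ge\frac{\delta}{2}\xi$ of it; (3) observe at most one child is ``partial'' so all-but-one kept children are ``full''; (4) show the partial child, if kept, automatically satisfies the bound, and if not kept, its lost mass is small enough — using that its $\xi$ is comparable to the full siblings' via the ancestor hypothesis — that we can re-absorb it into a full sibling while only mildly violating $\sigma(v_j)\le\xi(v_j)$, then rescale. I expect \textbf{the main obstacle} to be exactly this bookkeeping: ensuring the lower bound $\sigma(u)\ge\frac{\delta}{2}\xi(u)$ survives pruning of light children/subtrees, which genuinely requires invoking the hypothesis $\xi(u)\ge\delta\xi(v)$ for all ancestors (not just the parent) to control the cumulative loss along any root-to-leaf path, and then arguing that these losses telescope to at most $(1-\delta/2)\xi$ fraction. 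Everything else — additivity by construction, $\sigma\le\xi$ by construction, $\sigma(\rootv)=\xi(\rootv)$ by fiat, finiteness of branchings — is routine.
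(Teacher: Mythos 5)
There is a genuine gap: your proposal is an exploration that never converges to a working construction, and the final sketch (your steps (1)--(4)) still fails at the step you yourself flag. Sequential water-filling inevitably produces one ``partial'' child whose allocated value can be arbitrarily small compared to its $\xi$; keeping it breaks the lower bound $\sigma\ge\frac{\delta}{2}\xi$, dropping it breaks additivity, and your proposed repair --- re-absorbing its mass into a full sibling and ``rescaling'' --- violates the constraint $\sigma(v_j)\le\xi(v_j)$ and is never made precise. Moreover, the justification you lean on there is not available: the hypothesis $\xi(u)\ge\delta\xi(v)$ compares a vertex with its \emph{ancestors}, so it bounds each child by $\xi(v_i)\le\xi(u)/\delta$ but gives no comparison between siblings, which is what your absorption step would need. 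Your ``two-pass pruning'' variant fails for the reason you already noticed (the total $\xi$-mass of many light children need not be a small fraction of $\xi(u)$), and your worry about losses compounding along a root-to-leaf path points to a misreading of how the ancestor hypothesis is meant to be used.

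The two ideas you are missing are the following. First, at a retained vertex $u$ do not allocate greedily child by child; instead choose a \emph{minimal} (under containment) subset $L\subseteq\p^{-1}(u)$ of children with $\sum_{v\in L}\xi(v)\ge\sigma(u)$ (it exists by sub-additivity and $\sigma(u)\le\xi(u)$), delete all other children together with their subtrees, and when $|L|\ge 2$ distribute \emph{proportionally}: $\sigma(v)=\sigma(u)\,\xi(v)/\sum_{w\in L}\xi(w)$. Minimality gives $\sum_{L}\xi(w)-\min_L\xi<\sigma(u)$, and since the minimum is at most half the sum this forces $\sum_L\xi(w)<2\sigma(u)$, so \emph{every} retained child satisfies $\sigma(v)>\xi(v)/2$ --- there is no partial child at all. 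Second, when $L=\{v\}$ is a singleton, simply set $\sigma(v)=\sigma(u)$ and invoke the ancestor hypothesis \emph{once}: along a chain of single retained children $\sigma$ is constant, so $\sigma(v)=\sigma(y)$ where $y$ is the child of the nearest branching ancestor (or of the root) on this chain; there $\sigma(y)\ge\xi(y)/2$ by the previous case, and $\xi(y)\ge\delta\xi(v)$ by hypothesis, giving $\sigma(v)\ge\frac{\delta}{2}\xi(v)$ with a single factor of $\delta$, not a telescoping product. With these two devices the bookkeeping you anticipated as ``the main obstacle'' disappears, and the rest (additivity by construction, $\sigma\le\xi$, $\sigma(\rootv)=\xi(\rootv)$) is routine, as you said.
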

\begin{proof}
We use an inductive argument on $\T$, starting from the root, to decide which vertices to keep in $\Stree$, and how to change $\xi$. 
Formally, the proof is by induction on the depth of the vertices in $\T$. 

During the inductive process we maintain for each vertex of $\T$ a status in $\Stree$ as one of the following: ``pending", ``retained" (in $\Stree$), and ``deleted" (from $\Stree$).
The function $\sigma$ is constructed during the process, and at each point during the process, 
$\sigma$ is defined on the retained vertices
and their non-deleted children.
In the process we maintain the invariants  that the set of vertices which are either ``retained"  or ``pending" form a subtree rooted at $\rootv$, and the retained vertices form 
a (graph theoretic) connected component containing the root $\rootv$.
Furthermore, if a retained vertex $u\in\Stree$ has one or more non-deleted sibling or $u=\rootv$,  then 
$\xi(u)\ge \sigma(u) >\xi(u)/2$. 
Otherwise ($u\ne \rootv $ and $|\p^{-1}(\p(u))\cap \Stree|=1$),
$\xi(u)\ge \sigma(u) \ge \delta \xi(u)/2$.

We begin with all vertices of $\T$ in a ``pending" status.
The mapping $\sigma$ is defined only on the root, $\sigma(\rootv)=\xi(\rootv)$.
In the inductive step, let $u\in \T$ be a vertex in ``pending" status with minimal depth among the pending vertices.
The vertex $u$ is either $\rootv$ (if it is the first step of the inductive process), or its parent is already ``retained".
Either way, by the inductive invariant, $\sigma(u)$ is already set.
The status of $u$ is changed to ``retained".

Let $L\subseteq \p^{-1}(u)$ be a minimal subset (with respect to containment)  of the children of $u$ such that
\(
\sigma(u)\le \sum_L\xi(v)
\) (such a subset exists from the sub-additivity of $\xi$, and the inductive assumption $\sigma(u)\le \xi(u)$).
We label all the vertices in $\p^{-1}(u)\setminus L$ and their descendants as ``deleted" (i.e., they will not be part of $\Stree$).

If $|L|>1$, then from
the minimality condition of $L$, we  have
\[
\frac{\sum_L\xi(v)}2\le \sum_L\xi(v) - \min_L \xi(v)
< \sigma(u). 
\]
Hence $\sum_L\xi(v) < 2\sigma(u)$, and we define
for every $v\in L$,
\[\sigma(v):=\frac{\sigma(u)}{\sum_L\xi(v)}\cdot \xi(v) \in \bigl ( \xi(v)/2,\xi(v)\bigr ].\]
This, in particular, means that, $\sigma(u)=\sum_L\sigma(v)$.

If $L=\{v\}$ (a singleton), we set $\sigma(v)=\sigma(u)$.
The additivity  condition is trivially true. 
Let $w\in\p^{(*)}(u)$ be the lowest ancestor of $u$ which is either $\rootv$ or having at least two non-deleted children.
Let $\{y\}=\p^{(*)}(u)\cap \p^{-1}(w)$ be a (weak) ancestor of $u$ which is a child of $w$.
From the inductive hypothesis about the additivity of $\sigma$, $\sigma(v)=\sigma(u)=\sigma(y)$.
Also from the inductive hypothesis, $\sigma(y)>\xi(y)/2$;
and from the hypotehsis of the lemma, $\xi(y)\ge \delta \xi(v)$.
Hence, $\sigma(v)> \delta \xi(v)/2$.

Since every vertex of $\T$ is of finite depth
and the set of vertices at the same or smaller depth is also finite, 
this
process classifies all the vertices of $\T$ as either ``deleted" or ``retained".
From the inductive invariant, 
the retained vertices forms a subtree $\Stree$, and $\sigma$ is defined on $\Stree$ and satisfies the assertions of the lemma.
\end{proof}

\begin{proof}
[Proof of Theorem~\ref{thm:um-skeleton-regular}]
First observe that when $t=1$ the assertion is trivial: We set $S=\{x\}$ for some $x\in X$, and $\mu=\delta_x$ the dirac delta measure on $x$. 
Henceforth, we therefore assume  $t\in\{2,3,\ldots\}$.

An application of Lemma~\ref{lem:um-skeleton-regular} 
on $X$
results in a net-tree   $\T$,
clusters $\{C_u\}_{u\in\T}$,
and a sub-additive $\xi:\T\to [0,\infty)$
satisfying~\eqref{eq:mu-xi}.
Observe that if $u,v\in\T$, $u\in\p^{(*)}(v)$
then
\[
\xi(u)
\stackrel{\eqref{eq:mu-xi}}\ge 
 \mu^{1-1/t}(C_u)\ge \mu^{1-1/t}(C_v) 
\stackrel{\eqref{eq:mu-xi}}\ge 
\lambda^{-2/t}\xi(v).
\]
Thus, $\T$ and $\xi$ satisfy the conditions of Lemma~\ref{lem:balanced-subtree} with $\delta=\lambda^{-2/t}$.
%
%
Application of Lemma~\ref{lem:balanced-subtree} on $\T$
results in a subtree $\Stree$, and additive $\sigma:\Stree\to [0,\infty)$
that satisfies 
\begin{equation}\label{eq:sigma-mu}
\lambda^{-2/t}\mu^{1-1/t}(C_u)/2 \le 
\lambda^{-2/t}\xi(u)/2    \le    
\sigma(u) \le \xi(u)\le 
\lambda^{2/t} \mu^{1-1/t}(C_u),
\end{equation}
for every $u\in\Stree$.

Define $S=\Image(\Stree)\subseteq \Image(\T)$.
The induced ultrametric on $S$ is the restriction of $\rho$ to $S$.

Recall that a (measure-theoretic) semi-ring in $X$
is a collection of subsets $\mathcal S\in 2^X$ satisfying
\begin{enumerate*}[label=(\roman*)]
\item $\emptyset\in \mathcal S$;
\item if $A,B\in\mathcal S$ then $A\cap B\in\mathcal S$;
\item if $A,B\in\mathcal S$ then there exist $n\ge 0$ and
$A_1,\ldots, A_n\in\mathcal S$ pairwise disjoint such that 
$A\setminus B=\bigcup_{i=1}^n A_i$.
\end{enumerate*}
By Lemma~\ref{lem:um-rooted-tree},
 $\{\emptyset\}\cup \{\partial_\T(v)\}_{v\in\T}$ is 
a semi-ring consisting of all the open balls in $(U,\rho)$. 
Furthermore and
$\nu(\partial_\Stree(v))=\sigma(v)$ is a pre-measure on that semi-ring. 
By Carath\'eodory extension theorem (see~\cite[Theorem~1.53]{Klenke}), $\nu$ can be extended to a measure on the $\sigma$-algebra generated by $\{\partial_\T(v)\}_{u\in T}$, 
which is the $\sigma$-algebra of Borel sets of $(S,\rho)$.
Since the metrics $d|_S$ and $\rho|_S$ are topologically equivalent, and $S$ is a Borel (closed) set of $(X,d)$, $\nu$ can be extended to Borel measure on $X$ by simply define
$\nu(A)=\nu(A\cap S)$ on every Borel $A\subseteq X$.

We next prove~\eqref{eq:measure growth-regular}.
Fix $x\in X$ and $ r\ge 0$.
If $B_d(x,r)\cap S=\emptyset$, then
$\nu(B_d(x,r))=0$ and there is nothing to prove.
Otherwise, let $y\in B_d(x,r)\cap S$, so
\[
B_d(x,r)\cap S \subseteq B_d(y,2r) \cap S \subseteq B_{\rho|_S}(y,32tr).
\]
Since $B_{\rho|_S}(y,32tr)$ is a closed $(S,\rho|_S)$  ball, 
by Lemma~\ref{lem:um-rooted-tree} there exists
some $v\in\Stree$ such that $B_{\rho|_S}(y,32tr)=\partial_\Stree(v)$.
In particular $\diam_d(C_v)=\Delta(v)\le 32tr$.
Observe that 
\[
B_{\rho|_S}(y,32tr)=\partial_\Stree(v)\subseteq \overline{C_v} \subseteq B_d(y,32tr) \subseteq B_d(x,(32t+1)r).
\]
Hence
\begin{multline*}
\nu (B_d(x,r))\le \nu(\partial_\Stree(v))=\sigma(v) \\
\stackrel{\eqref{eq:sigma-mu}}{\le}\lambda^{2/t} \mu(C_v)^{1-1/t}
\le\lambda^{2/t}
  {\mu(B_d(x,(32t+1)r))^{1-1/t}}. 
\end{multline*}

We are left to prove~\eqref{eq:measure-shrink-regular}. 
Let $y\in S$ and $r\ge 0$,
\[
B_d(y,r)\cap S \supseteq B_{\rho|_S}(y,r),
\]
By Lemma~\ref{lem:um-rooted-tree} there exists some $v\in\Stree$
such that $\partial_\Stree(v)=B_{\rho|_S}(y,r)$. 
Let $v\in \Stree$ be such a vertex with the smallest depth in $\Stree$.
In particular $\Delta(v)\le r$ which implies that
$C_v\subset B_d(y,r)$.
Furthermore, either:
\begin{itemize}
\item
 The vertex $v$ is the root. In which case
 $B_d(\rep{v},r/t)\subseteq X=C_v$.
\item 
There exists $z\in \partial_\Stree(\p(v))\setminus B_{\rho|_S}(y,r)$. In this case,
$z,y\in \partial_\Stree(\p(v))$, and $\rho(z,y)>r$, and so 
$\Delta(\p(v))>r$.
 From Item~\ref{it:B-subset-C_u} of Lemma~\ref{lem:um-skeleton-regular}, we also have
\[B_d(\rep{v},cr/t)\subseteq B_d(\rep{v},c\Delta(\p(v))/t)\subseteq C_v \subseteq B_d(y,r),
\]
for some universal constant $c>0$. 
\end{itemize}

We conclude:
\begin{multline*}
\nu(B_d(y,r))\ge \nu(\partial_\Stree(v))=\sigma(v)
\stackrel{\eqref{eq:sigma-mu}}\ge (\lambda^{-2/t}/2) \cdot \mu(C_v)^{1-1/t} \\
\ge
(\lambda^{-2/t}/2) \cdot \mu(B_d(\rep{v},cr/t))^{1-1/t}.
\qedhere
\end{multline*}
\end{proof}

\section{Proof of the Dvoretzky-type theorem}
\label{sec:dvo-proof}

Using Theorem~\ref{thm:um-skeleton-regular}
we prove the following Dvoretzky-type result for Ahlfors regular spaces.
\begin{lemma}
 \label{lem:dvo-Hausdorff-Ahlfors}
 For every $t\in\{2,3,\ldots\}$,  $\alpha>0$, 
 and compact Ahlfors $\alpha$-regular space $(X,d)$, 
 there exists a compact $((1-1/t)\alpha)$-regular subset $Y\subseteq X$ whose ultrametric distortion is at most $O(t)$.
 \end{lemma}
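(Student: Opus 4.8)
The plan is to apply Theorem~\ref{thm:um-skeleton-regular} to the Ahlfors $\alpha$-regular space $(X,d)$ with its regularizing measure $\mu$ normalized to a probability measure (possible since $X$ is bounded, hence $\mu(X)<\infty$), and with the given integer $t$. Let $\beta=(1-1/t)\alpha$. The theorem produces a compact $S\subseteq X$ with ultrametric distortion at most $16t=O(t)$, together with a Borel probability measure $\nu$ supported on $S$ satisfying the two-sided bounds~\eqref{eq:measure growth-regular} and~\eqref{eq:measure-shrink-regular}. I would set $Y=S$ and $\nu$ the witnessing measure, and the entire remaining task is to convert those two bounds into the two inequalities $c'r^\beta\le \nu(B_d(y,r))\le C'r^\beta$ for $y\in Y$ and $r\in(0,\diam Y)$, which is exactly Ahlfors $\beta$-regularity of $(Y,d)$.

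For the upper bound, I would plug the Ahlfors upper estimate $\mu(B_d(x,\rho))\le C\rho^\alpha$ into~\eqref{eq:measure growth-regular}: for $y\in Y$,
\[
\nu(B_d(y,r))\le \lambda^{2/t}\bigl(\mu(B_d(y,C_1tr))\bigr)^{1-1/t}\le \lambda^{2/t}\bigl(C(C_1tr)^\alpha\bigr)^{1-1/t}
=\lambda^{2/t}C^{1-1/t}(C_1t)^{\alpha(1-1/t)}r^{\alpha(1-1/t)},
\]
so the prefactor $C'=\lambda^{2/t}C^{1-1/t}(C_1t)^{\beta}$ is a finite constant (depending on $t,\alpha,\lambda,C$, but that is allowed — Ahlfors regularity only requires some constants) and the exponent is $\beta$. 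One small point to check: the bound $\mu(B_d(x,\rho))\le C\rho^\alpha$ in~\eqref{eq:def:ahlfors-regular} is stated for $\rho<\diam X$, so if $C_1tr\ge \diam X$ one uses $\mu(B_d(y,C_1tr))\le\mu(X)=1$ and absorbs the resulting constant, valid because $r$ is then comparable to $\diam X$.

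For the lower bound I would use~\eqref{eq:measure-shrink-regular}: for $y\in Y$ and $r>0$ there is $x\in X$ with $B_d(x,c_1r/t)\subseteq B_d(y,r)$ and $\nu(B_d(y,r))\ge(\lambda^{-2/t}/2)\mu(B_d(x,c_1r/t))^{1-1/t}$; applying the Ahlfors \emph{lower} estimate $\mu(B_d(x,c_1r/t))\ge c(c_1r/t)^\alpha$ (again valid when $c_1r/t<\diam X$, which holds for $r<\diam X$ when $c_1/t<1$) gives
\[
\nu(B_d(y,r))\ge (\lambda^{-2/t}/2)\bigl(c(c_1r/t)^\alpha\bigr)^{1-1/t}=(\lambda^{-2/t}/2)c^{1-1/t}(c_1/t)^{\beta}r^{\beta},
\]
so $c'=(\lambda^{-2/t}/2)c^{1-1/t}(c_1/t)^{\beta}>0$. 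Here $\lambda$ can be taken to be the doubling constant $C2^\alpha/c$ of $\mu$ noted in the excerpt. Finally I would note that these two inequalities with a positive measure $\nu$ supported on the compact set $Y$ show $\dim_H(Y)=\beta=(1-1/t)\alpha$ and establish that $(Y,d)$ is Ahlfors $((1-1/t)\alpha)$-regular, while the distortion bound $16t=O(t)$ comes directly from the theorem.

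The only genuine subtlety — the main obstacle, such as it is — is bookkeeping the edge cases where the inflated/deflated radii $C_1tr$ and $c_1r/t$ fall outside $(0,\diam X)$, so that~\eqref{eq:def:ahlfors-regular} does not literally apply; this is handled by using $\mu(X)=1$ as a substitute upper bound for large radii and by observing that for the relevant range of $r$ (namely $r<\diam Y\le\diam X$) the deflated radius $c_1r/t$ is genuinely small once $t\ge2$ and $c_1<1$, or else can be compared to $\diam X$ directly. Everything else is substitution of the Ahlfors bounds into the conclusions of Theorem~\ref{thm:um-skeleton-regular}.
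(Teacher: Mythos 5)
Your proposal is correct and follows essentially the same route as the paper: apply Theorem~\ref{thm:um-skeleton-regular} to $X$ with the Ahlfors regular measure $\mu$, take $Y=S$, and substitute the bounds of~\eqref{eq:def:ahlfors-regular} into~\eqref{eq:measure growth-regular} and~\eqref{eq:measure-shrink-regular} to get two-sided $r^{(1-1/t)\alpha}$ bounds on $\nu$. Your extra bookkeeping (normalizing $\mu$ and handling radii outside $(0,\diam X)$) only makes explicit what the paper leaves implicit.
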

\begin{proof}
Let $\mu$ be the measure satisfying~\eqref{eq:def:ahlfors-regular} for $X$.
By Theorem~\ref{thm:um-skeleton-regular}, there exists
a compact subset $Y\subset X$ which embeds with distortion $O(t)$ in an ultrametric and a measure $\nu$ supported on $Y$ such that for any $x\in X$ and $r\in (0,\diam_d(X))$, 
there exists $x'\in X$ satisfying
\begin{multline*}
   \nu(B_d(x,r))  \le \lambda^{2/t} (\mu(B_d(x,C_1tr)))^{1-1/t}
   \\ \le  \lambda^{2/t} C(C_1tr)^{(1-1/t)\alpha}= A_{\lambda,t}\cdot r^{1-1/t},
 \end{multline*}
 \begin{multline*}
   \nu(B_d(x,r))  \ge 0.5 \lambda^{-2/t} 
    (\mu(B_d(x',c_1r/t)))^{1-1/t} 
    \\ \ge 
    0.5 \lambda^{-2/t} c (c_1r/t)^{1-1/t}= a_{\lambda,t}\cdot r^{1-1/t}.
\end{multline*}
Since $\nu$ is supported on $Y$, it means that $Y$ is $((1-1/t)\alpha)$-regular.
\end{proof}
The lemma above extracts an Ahlfors $\beta$-regular approximate ultrametric subset \emph{for a sequence of}  $\beta$'s as close as we wish to $\alpha$, 
\emph{but not at any}  $\beta<\alpha$. 
To obtain an Ahlfors $\beta$-regular subset for  arbitrary  $\beta<\alpha$, 
we use the following lemma from~\cite{arcozzi2019ahlfors}.
Here we give a quick and self-contained proof based on 
Lemma~\ref{lem:balanced-subtree}.

\begin{lemma}[{\cite[Theorem~1.4]{arcozzi2019ahlfors}}] \label{lem:um-subset-um}
Let $(U,\rho)$ be a compact Ahlfors $\alpha$-regular ultrametric, and  $\beta\in (0,\alpha]$. 
Then there exists a  $\beta$-regular subset $S\subset U$. 
\end{lemma}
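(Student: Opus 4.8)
The plan is to use the net-tree structure of a compact ultrametric together with the trimming mechanism of Lemma~\ref{lem:balanced-subtree}. By Lemma~\ref{lem:um-rooted-tree}, there is a surjective $1$-net-tree $\T$ over $(U,\rho)$ with $\rho(\rep{a},\rep{b})=\Delta(a\wedge b)$, and for every vertex $u$ that is the root or satisfies $\Delta(u)<\Delta(\p(u))$, the set $\partial_\T(u)$ is a ball of $(U,\rho)$ of radius $\Delta(u)$ (and every ball arises this way). After rescaling we may assume $\diam(U)=1$. Fix the Ahlfors-regular measure $\mu$ on $U$ with $c\,r^\alpha\le \mu(B_\rho(x,r))\le C\,r^\alpha$; since balls in an ultrametric are clopen and equal to $\partial_\T(u)$'s, we get $\mu(\partial_\T(u))\asymp \Delta(u)^\alpha$ for all such $u$, with the implied constants depending only on $c,C$. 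The idea is to define a \emph{sub-additive} pre-measure $\xi(u)=\mu(\partial_\T(u))^{\beta/\alpha}\asymp \Delta(u)^\beta$ on the relevant vertices of $\T$ (collapsing the redundant chains where $\Delta$ is constant so that we only keep ``branching'' vertices), check sub-additivity, apply Lemma~\ref{lem:balanced-subtree} to obtain a subtree $\Stree$ and an \emph{additive} $\sigma\asymp\xi$ on $\Stree$, and finally read off the $\beta$-regular measure on $S=\Image(\Stree)$ exactly as in the proof of Theorem~\ref{thm:um-skeleton-regular}.

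The steps, in order. First I would pass to the subtree $\T'\subseteq\T$ consisting of the root and all vertices $u$ with $\Delta(u)<\Delta(\p(u))$, re-rooting the parent relation so that a vertex's parent is its nearest strict ancestor in $\T'$; on $\T'$ every $\partial(u)$ is a genuine ball and $\Delta$ is strictly decreasing along edges. Second, define $\xi(u)=\mu(\partial_{\T'}(u))^{\beta/\alpha}$. Sub-additivity is the inequality $\mu(\partial(u))^{\beta/\alpha}\le\sum_{v\in\p^{-1}(u)}\mu(\partial(v))^{\beta/\alpha}$, which holds because $\partial(u)=\biguplus_v\partial(v)$ is a finite disjoint union, $\beta/\alpha\le 1$, and $\sum a_i^{\theta}\ge(\sum a_i)^{\theta}$ for $\theta\in(0,1]$ and nonnegative $a_i$. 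Third, check the ``bounded ratio'' hypothesis of Lemma~\ref{lem:balanced-subtree}: if $u$ is an ancestor of $v$ in $\T'$ then $\partial(v)\subseteq\partial(u)$ so $\xi(u)\ge\xi(v)$ — in fact the stronger $\xi(u)\ge\delta\xi(v)$ holds trivially with $\delta=1$, so Lemma~\ref{lem:balanced-subtree} applies (with $\delta=1$ the trimming only needs to make $\xi$ additive, losing at most a factor $2$). Fourth, apply Lemma~\ref{lem:balanced-subtree} to get $\Stree\subseteq\T'$ with the same root and additive $\sigma:\Stree\to[0,\infty)$ satisfying $\xi(u)/2\le\sigma(u)\le\xi(u)$ for every $u\in\Stree$. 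Fifth, set $S=\Image(\Stree)$; by Lemma~\ref{lem:um-rooted-tree} the sets $\{\partial_\Stree(v)\}$ form a semi-ring of clopen sets generating the Borel $\sigma$-algebra of $(S,\rho|_S)$, and additivity of $\sigma$ makes $\nu(\partial_\Stree(v))=\sigma(v)$ a pre-measure, which Carath\'eodory extends to a Borel measure $\nu$ on $S$.

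Finally I would verify $\beta$-regularity of $(S,\rho|_S)$ with this $\nu$, which mirrors the end of the proof of Theorem~\ref{thm:um-skeleton-regular}. For $y\in S$ and $r\in(0,1]$, the ball $B_{\rho|_S}(y,r)$ is $\partial_\Stree(v)$ for the minimal-depth $v\in\Stree$ with $\partial_\Stree(v)\subseteq B_{\rho|_S}(y,r)$; then $\Delta(v)\le r$, giving the upper bound $\nu(B_{\rho|_S}(y,r))=\sigma(v)\le\xi(v)=\mu(\partial(v))^{\beta/\alpha}\le (C\Delta(v)^\alpha)^{\beta/\alpha}\le C^{\beta/\alpha}r^\beta$. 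For the lower bound, minimality of $v$ forces $\partial_\Stree(\p(v))\not\subseteq B_{\rho|_S}(y,r)$ (or $v$ is the root), hence $\Delta(\p(v))>r$ in $\T'$, and since $\Delta$ on $\T'$ takes values comparable across a single edge only up to the geometry of the net — here I would use that in the reverse $1$-net-tree from Lemma~\ref{lem:um-rooted-tree} consecutive labels on $\T'$ need not be boundedly related, so I instead argue directly: $y\in\partial_\Stree(v)=B_\rho(\rep{v},\Delta(v))$ and $\Delta(v)$ is the \emph{largest} label of a $\Stree$-vertex contained in $B_{\rho|_S}(y,r)$, so any point of $S$ within $\rho$-distance $r$ of $y$ that lies in a $\T'$-ball of radius $\le r$ containing $\rep{v}$ must lie in $\partial_\Stree(v)$ once we also invoke that $B_\rho(y,r)\cap U$ itself is a $\T$-ball; thus $B_\rho(\rep{v}, r)\cap U\subseteq \partial(v)$ in $\T$, whence $\mu(\partial(v))\ge \mu(B_\rho(\rep{v},r))\ge c\,r^\alpha$ and $\nu(B_{\rho|_S}(y,r))=\sigma(v)\ge\xi(v)/2\ge (c\,r^\alpha)^{\beta/\alpha}/2=(c^{\beta/\alpha}/2)r^\beta$. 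Extending $\nu$ to $U$ by $\nu(A)=\nu(A\cap S)$ shows $S$ is $\beta$-regular.

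The main obstacle I anticipate is the lower bound in the last step: controlling $\mu(\partial(v))$ from below in terms of $r$ requires knowing that the chosen ball $\partial_\Stree(v)$ is not too much smaller than the target ball $B_\rho(y,r)$, which in an ultrametric hinges on the fact that $B_\rho(y,r)\cap U$ is itself one of the $\T$-balls $\partial_\T(w)$, so that either $\partial_\Stree(v)$ already equals the $S$-trace of that ball or $v$'s $\T'$-parent's ball strictly contains it and hence has radius $>r$ — in the latter case $\partial(v)$, being a child-ball in $\T'$, still contains the full $\mu$-mass $\mu(B_\rho(\rep{v},r))$ because $B_\rho(\rep{v},r)\subseteq\partial_{\T'}(v)$ whenever $r\le\Delta(v)$... and when $r>\Delta(v)$ the trimming may have deleted siblings, which is exactly where the factor loss is absorbed. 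Pinning down this case analysis cleanly — in particular making sure the minimal-depth choice of $v$ interacts correctly with both the re-rooting of $\T'$ and the deletions performed by Lemma~\ref{lem:balanced-subtree} — is the delicate bookkeeping, but it is entirely parallel to the argument already carried out for~\eqref{eq:measure-shrink-regular} in the proof of Theorem~\ref{thm:um-skeleton-regular}, with $\beta/\alpha$ playing the role of $1-1/t$ and $\lambda^{\pm 2/t}$ replaced by the constants $c,C$.
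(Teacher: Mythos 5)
Your proposal is correct and follows essentially the same route as the paper's proof: the net-tree of Lemma~\ref{lem:um-rooted-tree}, the sub-additive weight $\mu(\partial_\T(\cdot))^{\beta/\alpha}$, Lemma~\ref{lem:balanced-subtree} with $\delta=1$, a Carath\'eodory extension, and the same two-sided ball estimates. The ``delicate bookkeeping'' you flag at the end is resolved in the paper exactly along the lines you sketch, by taking the least-depth vertex $v$ with $\partial_\Stree(v)=B_{\rho|_S}(x,r)$ and observing that then $\partial_\T(v)=B_\rho(x,r)$, so $\mu(\partial_\T(v))\ge c\,r^\alpha$.
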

\begin{proof}
Assume $\beta<\alpha$ 
(when $\beta=\alpha$ there is nothing to prove).
Let $\T$ be a net-tree representing $U$ according to Lemma~\ref{lem:um-rooted-tree}.
Denote $\xi:\T\to [0,\infty)$, $\xi(v)=\mu(\partial_\T(v))$.
Since $\mu$ is a measure, $\xi$ is additive and monotone.
I.e., $\xi(u)=\sum_{\p^{-1}(u)}\xi(v)$, and $\xi(v)\le \xi(\p(v))$.
Since $\xi$ is additive, and $\beta/\alpha\in(0,1]$,
$\xi^{\beta/\alpha}$ is sub-additive, and since $\xi$ is monotone, so is $\xi^{\beta/\alpha}$.
By applying Lemma~\ref{lem:balanced-subtree} on $\T$ with $\xi^{\beta/\alpha}$ (and $\delta=1$), 
there exists a subtree $\Stree$ of $\T$, and $\sigma:\Stree\to[0,\infty)$ that is additive and 
$\xi(v)^{\beta/\alpha}\ge \sigma(v)\ge \xi(v)^{\beta/\alpha}/2$, for any $v\in\Stree$. 
Define $S=\Image(\Stree)\subseteq U$.

It follows from Lemma~\ref{lem:um-rooted-tree}
that $\{\partial_\Stree(v)\}_{v\in\Stree}$ is 
a (measure-theoretic) semi-ring consisting of all the open balls in $(S,\rho|_S)$. Thus,  $\nu(\partial_\Stree(v))=\sigma(v)$  is a pre-measure on that semi-ring. 
By Carath\'eodory extension theorem, $\nu$ can be extended to a measure on the $\sigma$-algebra generated by $\{\partial_\Stree(u)\}_{u\in \Stree}$, which are the Borel sets of $(S,\rho|_S)$.

Fix a closed ball $B_{\rho|_S}(x,r)\subseteq S$,
where $x\in S$, $r\in(0,\diam_\rho(S))$.
By Lemma~\ref{lem:um-rooted-tree}, there exists $v\in\Stree$ such that
$\partial_\Stree(v)=B_{\rho|_S}(x,r)$, and let $v$ be such vertex of  least depth. I.e., 
Either $v=\rootv$ or $\Delta(v)<\Delta(\p(v))$.
Observe that 
$v\in\T$ and $\partial_T(v)\subseteq B_\rho(x,r)$.
Therefore,
\[
\nu(B_{\rho|_S}(x,r))\le \xi(v)^{\beta/\alpha}=
\mu(\partial_\T(v))^{\beta/\alpha}\le \mu(B_\rho(x,r))^{\beta/\alpha}\le C^{\beta/\alpha} \cdot r^\beta,
\]

In the other direction, assume first that $v=\rootv$. Then
\[
\nu(B_{\rho|_S}(x,r))\ge 0.5\cdot \xi(\rootv)^{\beta/\alpha}=
0.5\cdot \mu(\partial_\T(\rootv))^{\beta/\alpha}=0.5\cdot 
\mu(U)^{\beta/\alpha}\ge 0.5 c^{\beta/\alpha} \cdot r^\beta.
\]
If $v\ne \rootv$, then by the choice of $v$, there exists
$z\in\partial_\Stree(\p(v))$ such that $\rho(z,x)>r$,
and hence 
 \[\partial_\T(\p(v))\supsetneqq B_{\rho}(x,r) \supseteq \partial_\T(v).\] 
 By Lemma~\ref{lem:um-rooted-tree} this means that $\partial_\T(v)= B_{\rho}(x,r)$, and therefore,
\[
\nu(B_{\rho|_S}(x,r))\ge 0.5\cdot \xi(v)^{\beta/\alpha}=
0.5\cdot \mu(\partial_\T(v))^{\beta/\alpha}=0.5\cdot 
\mu(B_\rho(x,r))^{\beta/\alpha}\ge 0.5 c^{\beta/\alpha} \cdot r^\beta.
\]
We conclude that $S$ is $\beta$-regular.
\end{proof}

\begin{proof}[Proof of Theorem~\ref{thm:cantor-ahlfors}]
Fix a bounded Ahlfors $\alpha$-regular space $(X,d)$. 
$X$ must be compact, see~\cite[Corollary~5.2]{DS-fractals}.
Fix $\beta\in(0,\alpha)$. 
Let $t=\lceil \frac{\alpha}{\alpha-\beta}\rceil\in\{2,3,\ldots\}$.
Observe that $(1-1/t)\alpha \ge \beta$ and 
$t\le 2 \frac{\alpha}{\alpha-\beta}$.
By Lemma~\ref{lem:dvo-Hausdorff-Ahlfors} 
there exists an Ahlfors $((1-1/t)\alpha)$-regular compact subset $Z\subset X$ such that $(Z,d)$ embeds in an ultrametric $(Z,\rho)$ with distortion
$O(t)=O(\alpha/(\alpha-\beta))$.
Alhfors regularity is invariant of biLipschitz isomorphisims,
and therefore $(Z,\rho)$ is also $((1-1/t)\alpha)$-regular.
By Lemma~\ref{lem:um-subset-um} there exists a compact subset 
$Y\subset Z$ such that $(Y,\rho|_Y)$ is $\beta$-regular.
Since is $(Y,d|_Y)$ is biLipschitz isomorphic to
$(Y,\rho|_Y)$ (with distortion $O(\alpha/(\alpha-\beta)))$, it is also $\beta$-regular.
\end{proof}

\section{Remarks} 
\label{sec:remarks}

The basic approach used here for proving Theorem~\ref{thm:um-skeleton-regular} is similar to the one employed in~\cite{mendel2021simple}.
In order to obtain the lower bound~\eqref{eq:measure-shrink-regular} on $\nu$,
we amended that approach as follows:
\begin{enumerate}[label=\alph*)]
\item
We made sure that the clusters created in the hierarchical decomposition  
contain balls of the underlying spaces
(Item~\ref{it:B-subset-C_u} of Lemma~\ref{lem:um-skeleton-regular}). 
We achieved it by applying Corollary~\ref{cor:bar-ramsey-decomp-2} not 
directly on the space $X$, but on a hierarchy of the nets (the net-tree) that ensured that  each point of the net carries with it a porpotional ball.

\item Lemma~\ref{lem:um-skeleton-regular}
produced a sub-additive ``premeasure" $\xi$, while we need an \emph{additive} premeasure. 
This issue was addressed in~\cite{MN-skeleton,mendel2021simple}
by artificially lowering $\xi$ down the net-tree.
The resulting measure $\nu$ can not be bounded from below by $\mu^{1-1/t}$.
Here we used a different tactic in Lemma~\ref{lem:balanced-subtree}: 
The bulk of the slack in the sub-additivity is eliminated by trimming the net-tree.
The lowering of $\xi$ is done only sparingly for fine-tuning.
\end{enumerate}

Both steps do not seem to be particularly tied to the construction 
from~\cite{mendel2021simple}, 
and might as well work with the more general construction of ultrametric skeleton from~\cite{MN-hausdorff,MN-skeleton}.
That is, an appropriate version of Theorem~\ref{thm:um-skeleton-regular} may also hold
without the doubling assumption.
We did not pursue this direction further here.

The lower bound~\eqref{eq:measure-shrink-regular} on $\nu$ in Theorem~\ref{thm:um-skeleton-regular} is 
qualitatively weaker than the upper bound~\eqref{eq:measure growth-regular}. 
Some of it is clearly unavoidable: 
the universal quantifier can not be ``for every $y\in X$",
since ultrametric subsets must have ``gaps", like the interval $[0.4,0.6]$ in the Cantor set (as a subset of $X=[0,1]$). 
It is not clear whether the two balls in~\eqref{eq:measure-shrink-regular} can be concentric. 
I.e., whether it is possible to obtain the statement:
``For every $y\in S$ and $r\in [0,\infty)$, 
$
 \nu\left(B_d(y,r)\right) \ge c'_{\lambda,t} \cdot \mu(B_d(y,c_t r))^{1-1/t} 
$."
 
Theorem~\ref{thm:um-skeleton-regular} is phrased for general finite Borel measures, but is  applied in the proof of Theorem~\ref{thm:cantor-ahlfors} only to regular measures, which are, in particular, doubling measures. 
For doubling measures, one could conceivably replace the use of the auxiliary net-tree in the proof of Lemma~\ref{lem:um-skeleton-regular} with Christ's dyadic-decomposition (see~\cite[Theorem~11]{Christ} or~\cite[Theorem~A]{ARSWB}) to simplify the exposition of the proof of Theorem~\ref{thm:cantor-ahlfors}.

\subsection*{Acknowledgments}
The author thanks Assaf Naor for comments on a preliminary version of the paper. 
The author also acknowledges
 support by U.S.-Israel Binational Science Foundation, grant 2018223.

\bibliographystyle{plainurl}
\bibliography{simple-ramsey} 

\end{document}